\theoremstyle{plain}
\newtheorem{theorem}{Theorem}[section]
\newtheorem{proposition}[theorem]{Proposition}
\newtheorem{corollary}[theorem]{Corollary}
\newtheorem{lemma}[theorem]{Lemma}
\theoremstyle{definition}
\newtheorem{remark}[theorem]{Remark}
\newtheorem{example}[theorem]{Example}
\newcommand{\abs}[1]{\lvert#1\rvert}
\newcommand{\bigabs}[1]{\bigl\lvert#1\bigr\rvert}
\newcommand{\term}[1]{{\textit{\textbf{#1}}}}   
\newcommand{\marg}[1]{\marginpar{\tiny #1}}     
\renewcommand{\mid}{\::\:}
\newcommand{\goeso}{\xrightarrow{\mathrm{o}}}	
\newcommand{\goesuo}{\xrightarrow{\mathrm{uo}}}	
\DeclareSymbolFont{bbold}{U}{bbold}{m}{n}
\DeclareSymbolFontAlphabet{\mathbbold}{bbold}
\def\one{\mathbbold{1}}
\DeclareMathOperator{\Int}{Int}
\DeclareMathOperator{\supp}{supp}
\renewcommand{\le}{\leqslant}
\renewcommand{\ge}{\geqslant}
\begin{document}

\title{Order and uo-convergence in spaces of continuous functions}

\author{Eugene Bilokopytov}
\email{bilokopy@ualberta.ca}

\author{Vladimir G. Troitsky}
\email{troitsky@ualberta.ca}

\address{Department of Mathematical and Statistical Sciences,
  University of Alberta, Edmonton, AB, T6G\,2G1, Canada.}

\thanks{The second author was supported by an NSERC grant.}
\keywords{Vector lattice, Banach lattice, order convergence, continuous functions
}
\subjclass[2010]{Primary: 46B42. Secondary: 46A40}

\date{\today}

\begin{abstract}
  We present several characterizations of uo-convergent nets or
  sequences in spaces of continuous functions $C(\Omega)$,
  $C_b(\Omega)$, $C_0(\Omega)$, and $C^\infty(\Omega)$, extending
  results of~\cite{vanderWalt:18}. In particular, it is shown that a
  sequence uo-converges iff it converges pointwise on a co-meagre
  set. We also characterize order bounded sets in spaces of continuous
  functions. This leads to characterizations of order convergence.
\end{abstract}

\maketitle

\section{Introduction}

Being a vector lattice, the space $C(\Omega)$ of continuous functions
on a completely regular Hausdorff topological space is endowed with
order and uo-convergences arising from its order structure. While it
has been known for a long time that for sequences in $L_p$-spaces (and
in other ``nice'' spaces of measurable functions), uo-convergence
agrees with convergence almost everywhere, there have been no similar
characterizations for spaces of continuous functions. There have been
several partial results suggesting that order and uo-convergence in
$C(\Omega)$ somewhat corresponds to pointwise convergence on a ``large
set''. In particular, it was proved in~\cite{vanderWalt:18} using
techniques of semi-continuous functions that for a modified definition
of uo-convergence, a sequence in $C(\Omega)$ uo-converges iff it
converges pointwise outside of a meagre set, under the assumption that
$\Omega$ is a Baire space and $C(\Omega)$ is
almost $\sigma$-order complete. In this paper, we prove this (and
other similar statements) without the almost $\sigma$-order completeness
assumption and for the ``full'' definition of order convergence using
direct topological arguments. Our main tool is Theorem~\ref{CK-o-conv},
which characterizes uo-convergence of nets in $C(\Omega)$ in ``local''
topological terms.

Recall that a net converges in order iff it uo-converges and has an
order bounded tail. In view of this, our characterizations of
uo-convergent nets immediately yield similar
characterizations of order convergent nets.

We then extend these results to ``more specialized'' spaces of
continuous functions: $C_b(\Omega)$, $C_0(\Omega)$, and
$C^\infty(\Omega)$. We also characterize uo-Cauchy nets in these
spaces.

As an application, we deduce a characterization of uo-convergence in
vector lattices with PPP. In the last section, we extend some of the
results to the setting of Boolean algebras.

\section{Preliminaries}

Throughout this note, $X$ will stand for an Archimedean vector
lattice.  We refer the reader to~\cite{Aliprantis:06} for background
on vector lattices.  Recall that a net $(x_\alpha)$ in a vector
lattice \term{converges in order} to $x$ if there exists a net $(u_\gamma)$,
possibly with a different index set, such that $u_\gamma\downarrow 0$,
and for every $\gamma$ there exists $\alpha_0$ such that
$\abs{x_\alpha-x}\le u_\gamma$ for all $\alpha\ge\alpha_0$; we write
$x_\alpha\goeso 0$. The following lemma is standard and
straightforward.

\begin{lemma}\label{oconv-dom-set}
  For a net $(x_\alpha)$ in a vector lattice~$X$, $x_\alpha\goeso 0$
  iff there exists a set $G\subseteq X_+$ such that $\inf G=0$ and
  every element of $G$ dominates a tail of $(x_\alpha)$, i.e., for
  every $g\in G$ there exists $\alpha_0$ such that $\abs{x_\alpha}\le
  g$ for all $\alpha\ge\alpha_0$.
\end{lemma}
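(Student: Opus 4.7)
The plan is to prove both implications. Only $(\Leftarrow)$ requires any construction; $(\Rightarrow)$ is immediate.

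For $(\Rightarrow)$, assume $x_\alpha \goeso 0$ with witness $(u_\gamma)_{\gamma \in \Gamma}$, so $u_\gamma \downarrow 0$ and each $u_\gamma$ dominates a tail of $(x_\alpha)$. I would simply take $G = \{u_\gamma : \gamma \in \Gamma\} \subseteq X_+$. Since $(u_\gamma)$ is decreasing with infimum $0$, the range $G$ also has infimum $0$, and by construction each element of $G$ dominates a tail of $(x_\alpha)$.

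For $(\Leftarrow)$, the task is to manufacture a decreasing net out of the unstructured set $G$. The standard device is to index by the directed set $\Gamma$ of finite nonempty subsets of $G$ ordered by inclusion, and for $\gamma = \{g_1, \dots, g_n\} \in \Gamma$ set
\[
u_\gamma \;=\; g_1 \wedge \cdots \wedge g_n \,\in\, X_+.
\]
By construction $(u_\gamma)_{\gamma \in \Gamma}$ is decreasing. To check $u_\gamma \downarrow 0$, observe that any lower bound of $\{u_\gamma : \gamma \in \Gamma\}$ is, in particular, a lower bound of $\{u_{\{g\}} : g \in G\} = G$, hence is $\le \inf G = 0$; thus $\inf_\gamma u_\gamma = 0$. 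For each $\gamma = \{g_1, \dots, g_n\}$, the hypothesis yields indices $\alpha_1, \dots, \alpha_n$ past which $\abs{x_\alpha} \le g_i$ respectively, and directedness of the index set of $(x_\alpha)$ produces $\alpha_0$ dominating them all, giving $\abs{x_\alpha} \le u_\gamma$ for $\alpha \ge \alpha_0$. This is precisely the definition of $x_\alpha \goeso 0$.

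There is no serious obstacle; the only point that requires attention is verifying $\inf_\gamma u_\gamma = 0$ for the constructed net, which works because each $g \in G$ itself appears as the singleton $u_{\{g\}}$ in the family, so lower bounds of the family of infima automatically bound $G$ from below.
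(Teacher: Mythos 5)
Your proof is correct, and it is the standard argument the authors have in mind (the paper omits the proof, calling the lemma ``standard and straightforward''). Both directions are handled properly: taking $G$ to be the range of the witnessing net for the forward implication, and forming finite infima indexed by finite subsets of $G$ for the converse, with the key verification that $\inf_\gamma u_\gamma = 0$ following from the presence of the singletons $u_{\{g\}}=g$ among the $u_\gamma$.
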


We say that $(x_\alpha)$ \term{unbounded order converges} to $x$ or,
short, \term{uo-converges}, and write $x_\alpha\goesuo x$ if
$\abs{x_\alpha-x}\wedge u\goeso 0$ for every $u\ge 0$. We refer the
reader to~\cite{Gao:17} and~\cite{Papangelou:64} for background
information on uo-convergence.  The two convergences agree for order
bounded nets. If $w\ge 0$ is a weak unit then $x_\alpha\goesuo x$ iff
$\abs{x_\alpha-x}\wedge w\goeso 0$. A sublattice $Y$ of $X$ is
\term{regular} if the inclusion map is order continuous, i.e.,
preserves order convergence of nets. Every order dense sublattice is
regular. Given a net $(x_\alpha)$ in a regular sublattice $Y$ of~$X$,
$x_\alpha\goesuo 0$ in $X$ iff $x_\alpha\goesuo 0$ in~$Y$.

Recall that a net $(x_\alpha)_{\alpha\in\Lambda}$
is \term{order Cauchy} if the double net $(x_\alpha-x_\beta)$ indexed by
$\Lambda^2$ is order null. \term{Uo-Cauchy} nets are defined in a similar
fashion.


While all results of the paper are valid for compact Hausdorff spaces,
we will state and prove them in a more general setting.  Throughout
the paper, $\Omega$ stands for a completely regular Hausdorff
topological space (also known as a Tychonov space), which is exactly
the class of Hausdorff spaces where the conclusion of Uryson's
lemma holds. Recall that every locally compact Hausdorff space or every
normal space is completely regular.

We write $C(\Omega)$ for the space of all real-valued continuous
functions on $\Omega$, and $\one$ for the constant one function.  A
subset $A$ of a Hausdorff topological space is \term{nowhere dense} if
$\Int\overline{A}=\varnothing$. It is easy to see that the boundary of
an open set is nowhere dense.  We say that $A$ \term{meagre} or
\term{of first category} if it is a union of a sequence of nowhere
dense sets. The class of meagre sets is closed under taking subsets
and countable unions. A set is \term{co-meagre} if its complement is
meagre. It is easy to see that $A$ is co-meagre iff $A$ contains an
intersection of a sequence of open dense sets. $\Omega$ is a
\term{Baire space} if it satisfies the conclusion of Baire Category
Theorem, i.e., if every intersection of countably many dense open sets
is dense or, equivalently, if every co-magre set is dense. Every
locally compact Hausdorff space and every complete metrizable space is
Baire; see, e.g., \cite[IX.5, Theorem 1]{Bourbaki:98}.

\section{Convergence in $C(\Omega)$}

The following lemma is essentially Proposition~4.13
in~\cite{Bilokopytov}; we provide the proof for completeness.

\begin{lemma}\label{CK-set-inf}
  Suppose that $\Omega$ is a completely regular Hausdorff topological
  space and $G\subseteq C(\Omega)_+$. TFAE:
  \begin{enumerate}
    \item\label{CK-set-inf-inf} $\inf G=0$;
    \item\label{CK-set-inf-t} for every non-empty open set $U$ and every
  $\varepsilon>0$ there exists $t\in U$ and $g\in G$ with
  $g(t)<\varepsilon$;
    \item\label{CK-set-inf-V} for every non-empty open set $U$ and every
  $\varepsilon>0$ there exists a non-empty open set $V\subseteq U$ and
  $g\in G$ such that $g(t)<\varepsilon$ for all $t\in V$.
  \end{enumerate}
\end{lemma}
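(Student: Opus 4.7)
The plan is to prove the cycle (i)$\Rightarrow$(ii)$\Rightarrow$(iii)$\Rightarrow$(i), where in fact (ii)$\Leftrightarrow$(iii) will be handled by a direct continuity argument in both directions.

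First, (iii)$\Rightarrow$(ii) is immediate: pick any $t\in V$. For (ii)$\Rightarrow$(iii), given a non-empty open $U$ and $\varepsilon>0$, I would apply (ii) to $U$ with (say) the value $\varepsilon/2$ to obtain $t\in U$ and $g\in G$ with $g(t)<\varepsilon/2$; then by continuity of $g$, the set $V:=\bigl\{s\in U\mid g(s)<\varepsilon\bigr\}=g^{-1}\bigl((-\infty,\varepsilon)\bigr)\cap U$ is open, contains $t$, and witnesses (iii).

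For (iii)$\Rightarrow$(i), I would argue by contrapositive: suppose $\inf G\ne 0$, so there exists $h\in C(\Omega)$ with $h\le g$ for all $g\in G$ but $h\not\le 0$. Then $h(t_0)>0$ for some $t_0$, and by continuity of $h$ the open set $U:=\bigl\{t\mid h(t)>h(t_0)/2\bigr\}$ is non-empty and every $g\in G$ satisfies $g(t)\ge h(t)>h(t_0)/2$ on $U$, which contradicts (iii) (and in fact (ii)) for $\varepsilon=h(t_0)/2$.

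The main obstacle, and the only place where the complete regularity hypothesis is used, is (i)$\Rightarrow$(iii). Again arguing by contrapositive, suppose (iii) fails: there is a non-empty open $U$ and $\varepsilon>0$ such that no non-empty open $V\subseteq U$ and $g\in G$ satisfy $g<\varepsilon$ on $V$. Equivalently, for each $g\in G$ the set $\{t\in U\mid g(t)\ge\varepsilon\}$ is dense in $U$, and by continuity of $g$ this forces $g\ge\varepsilon$ on all of $U$. Pick any $t_0\in U$; since $\Omega$ is completely regular and $\Omega\setminus U$ is closed and does not contain $t_0$, there exists a continuous $f\colon\Omega\to[0,1]$ with $f(t_0)=1$ and $f\equiv 0$ off $U$. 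Set $h:=\varepsilon f\in C(\Omega)_+$. Then $h(t_0)=\varepsilon>0$, so $h\ne 0$, while $h\le\varepsilon\le g$ on $U$ and $h=0\le g$ off $U$ for every $g\in G$. Thus $h$ is a non-zero positive lower bound for $G$, contradicting $\inf G=0$, and completing the cycle.
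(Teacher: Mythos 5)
Your proposal is correct and follows essentially the same route as the paper: the forward direction hinges on the same Urysohn-type function ($\varepsilon f$ with $f$ supported in $U$) to witness a non-zero lower bound, and the reverse direction on the observation that a non-zero lower bound exceeds some $\varepsilon>0$ on some non-empty open set. The only differences are cosmetic — you route the complete-regularity argument through (iii) rather than (ii) (adding the small density observation that a continuous $g$ which is $\ge\varepsilon$ on a dense subset of $U$ is $\ge\varepsilon$ on all of $U$) and you spell out the equivalence (ii)$\Leftrightarrow$(iii), which the paper leaves as ``easy to see.''
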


\begin{proof}
  \eqref{CK-set-inf-inf}$\Rightarrow$\eqref{CK-set-inf-t}
  Suppose that $\inf G=0$, yet \eqref{CK-set-inf-t} fails,
  that is, there exists an open non-empty $U$ and $\varepsilon>0$ such
  that every $g\in G$ is greater than or equal to $\varepsilon$ on
  $U$. Since $\Omega$ is completely regular, we find a non-zero
  $f\in C(\Omega)_+$ such that $f\le\varepsilon\one$ and $f$ vanishes
  outside of~$U$. Then $f\le G$, which contradicts $\inf G=0$.

  To show that
  \eqref{CK-set-inf-t}$\Rightarrow$\eqref{CK-set-inf-inf}, suppose
  that $\inf G\ne 0$. Then there exists $f\in C(\Omega)$ with
  $0<f\le G$. We can then find an open non-empty set $U$ and
  $\varepsilon>0$ such that $f$ is greater that $\varepsilon$
  on~$U$. It follows that every $g\in G$ greater that $\varepsilon$
  on~$U$, which contradicts \eqref{CK-set-inf-t}.
  
  It is easy to see that
  \eqref{CK-set-inf-t}$\Leftrightarrow$\eqref{CK-set-inf-V}.
\end{proof}

Next, we are going to characterize  uo-convergence in $C(\Omega)$.
Since $f_\alpha\goesuo f$ iff $\abs{f_\alpha-f}\goesuo 0$, it suffices
to characterize order convergence of positive nets to zero.

\begin{theorem}\label{CK-o-conv}
  Let $\Omega$ be completely regular and $(f_\alpha)$ a net in $C(\Omega)_+$.
  Then $f_\alpha\goesuo 0$ iff for every non-empty open set $U$ and every
  $\varepsilon>0$ there exists an open non-empty $V\subseteq U$ and an
  index $\alpha_0$ such that $f_\alpha$ is less than $\varepsilon$ on
  $V$ whenever $\alpha\ge\alpha_0$.
\end{theorem}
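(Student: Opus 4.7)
The natural plan is to reduce to the case already handled by Lemmas \ref{oconv-dom-set} and \ref{CK-set-inf}. The constant function $\one$ is a weak unit in $C(\Omega)$ (since any $f \in C(\Omega)_+$ with $f \neq 0$ is strictly positive at some point, whence $f \wedge \one \neq 0$). Therefore $f_\alpha \goesuo 0$ is equivalent to $f_\alpha \wedge \one \goeso 0$. By Lemma~\ref{oconv-dom-set}, this holds iff there exists $G \subseteq C(\Omega)_+$ with $\inf G = 0$ such that every $g \in G$ dominates a tail of $(f_\alpha \wedge \one)$, and by Lemma~\ref{CK-set-inf}, the condition $\inf G = 0$ can be recast in terms of non-empty open sets. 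The theorem then amounts to showing that this combined condition matches the ``locally small on some open subset eventually'' condition in the statement.

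For the forward direction, given such $G$ and given $U$ and $\varepsilon \in (0,1)$, I would apply Lemma~\ref{CK-set-inf}\eqref{CK-set-inf-V} to obtain a non-empty open $V \subseteq U$ and $g \in G$ with $g < \varepsilon$ on $V$; then the tail $(f_\alpha \wedge \one)_{\alpha \ge \alpha_0}$ is dominated by $g$, so $f_\alpha \wedge \one < \varepsilon < 1$ on $V$, which forces $f_\alpha < \varepsilon$ there. The case of arbitrary $\varepsilon > 0$ follows by replacing $\varepsilon$ with $\min(\varepsilon, 1/2)$.

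For the reverse direction, take $G$ to be the collection of all $g \in C(\Omega)_+$ with $g \le \one$ that dominate some tail of $(f_\alpha \wedge \one)$, and verify condition~\eqref{CK-set-inf-V} of Lemma~\ref{CK-set-inf}. Fix a non-empty open $U$ and $\varepsilon > 0$ (we may assume $\varepsilon < 1$). By hypothesis applied with $\varepsilon/2$ in place of $\varepsilon$, there is a non-empty open $V \subseteq U$ and $\alpha_0$ such that $f_\alpha < \varepsilon/2$ on $V$ for all $\alpha \ge \alpha_0$. I would then build the required $g \in G$ via complete regularity: pick $t_0 \in V$, choose $h \in C(\Omega)$ with $0 \le h \le 1$, $h(t_0) = 0$, and $h \equiv 1$ on $\Omega \setminus V$, and set $g = h \vee (\tfrac{\varepsilon}{2}\one)$. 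Then $g \in C(\Omega)_+$, $g \le \one$, and outside $V$ we have $g = 1 \ge f_\alpha \wedge \one$, while on $V$ we have $g \ge \varepsilon/2 > f_\alpha \ge f_\alpha \wedge \one$ for $\alpha \ge \alpha_0$; so $g \in G$. Finally, $V' := \{t : h(t) < \varepsilon/2\}$ is a non-empty open subset of $V$ on which $g < \varepsilon$, providing what Lemma~\ref{CK-set-inf} requires.

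The main obstacle is the construction of $g$ in the reverse direction: one must simultaneously keep $g$ below $\varepsilon$ on some open subset of $U$ and yet force $g$ to dominate the given tail everywhere (including on the complement of $V$ where the tail is only bounded by $\one$). Complete regularity of $\Omega$ is precisely what allows one to glue these two local requirements together into a single continuous function, and this is where the Tychonov assumption on $\Omega$ enters the proof in an essential way.
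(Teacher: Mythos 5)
Your proof is correct and follows essentially the same route as the paper's: reduce to $f_\alpha\wedge\one\goeso 0$ via the weak unit $\one$, combine Lemmas~\ref{oconv-dom-set} and~\ref{CK-set-inf}, and in the converse construct the dominating function $g=h\vee\tfrac{\varepsilon}{2}\one$ from a Urysohn-type function $h$ exactly as the paper does. The only cosmetic differences are that you take $G$ to be \emph{all} dominating functions below $\one$ rather than just the constructed ones, and you exhibit an explicit open set $V'$ on which $g<\varepsilon$ instead of a single point.
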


\begin{proof}
  Suppose $f_\alpha\goesuo 0$. Then $f_\alpha\wedge\one\goeso 0$. By
  Lemma~\ref{oconv-dom-set}, there exists a set $G\subseteq C(\Omega)_+$
  such that $\inf G=0$ and every member of $G$ dominates a tail of
  $(f_\alpha\wedge\one)$. Fix a non-empty open $U$ and
  $\varepsilon\in(0,1)$. Let $V$ and $g$ be as in
  Lemma~\ref{CK-set-inf}\eqref{CK-set-inf-V}. Since $g$ dominates a tail of
  $(f_\alpha\wedge\one)$, there exists $\alpha_0$ such that
  $f_\alpha\wedge\one \le g$ for all $\alpha\ge\alpha_0$. In particular,
  \begin{math}
    f_\alpha(s)\wedge 1\le g(s)<\varepsilon,
  \end{math}
  hence $f_\alpha(s)<\varepsilon$
  for all $s\in V$. This proves the forward implication.

  To prove the converse, suppose that the condition in the theorem is
  satisfied. Since $\one$ is a weak unit, it suffices to prove that
  $f_\alpha\wedge\one\goeso 0$. We will use Lemma~\ref{oconv-dom-set}
  to prove this. Fix an open non-empty set $U$ and
  $\varepsilon>0$. Let $V$ and $\alpha_0$ be as in the assumption. Fix
  any $t\in V$. Since $\Omega$ is completely regular, we can find
  $h\in C(\Omega)_+$ such that $h(t)=0$ and $h$ equals 1 outside
  of~$V$. Put $g=h\vee\varepsilon\one$. Then $g(t)=\varepsilon$. We
  claim that $f_\alpha\wedge\one\le g$ for every
  $\alpha\ge\alpha_0$. Indeed, if $s\in V$ then
  $f_\alpha(s)<\varepsilon\le g(s)$, and if $s\notin V$ then
  $(f_\alpha\wedge\one)(s)\le 1=h(s)\le g(s)$.

  Repeat this process for every pair $(U,\varepsilon)$, where $U$ is
  open and non-empty and $\varepsilon>0$; let $G$ be the set of the
  resulting functions~$g$. Each such $g$ dominates a tail of
  $(f_\alpha\wedge\one)$. Lemma~\ref{CK-set-inf} yields $\inf G=0$; this
  completes the proof.
\end{proof}

\begin{corollary}\label{CK-uo-Cauchy}
  Let $\Omega$ be completely regular and $(f_\alpha)$ a net in
  $C(\Omega)$.  Then $(f_\alpha)$ is uo-Cauchy iff for every non-empty
  open set $U$ and every $\varepsilon>0$ there exists an open
  non-empty $V\subseteq U$ and an index $\alpha_0$ such that
  $\bigabs{f_\alpha(t)-f_\beta(t)}<\varepsilon$ for all $t\in V$ and
  $\alpha,\beta\ge\alpha_0$.
\end{corollary}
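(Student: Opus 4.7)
The plan is to reduce this directly to Theorem~\ref{CK-o-conv} applied to the double net of differences. Recall that $(f_\alpha)_{\alpha\in\Lambda}$ is uo-Cauchy by definition if the double net $(f_\alpha-f_\beta)_{(\alpha,\beta)\in\Lambda^2}$ is uo-null; equivalently, since uo-convergence is compatible with absolute values, if the positive double net $\bigl(\abs{f_\alpha-f_\beta}\bigr)_{(\alpha,\beta)\in\Lambda^2}$ uo-converges to $0$ in $C(\Omega)$. So I would apply Theorem~\ref{CK-o-conv} to this positive net.

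For the forward direction, assume $(f_\alpha)$ is uo-Cauchy. Theorem~\ref{CK-o-conv} applied to $\abs{f_\alpha-f_\beta}$ yields, for every non-empty open $U$ and every $\varepsilon>0$, a non-empty open $V\subseteq U$ and an index $(\alpha_0,\beta_0)\in\Lambda^2$ such that $\bigabs{f_\alpha(t)-f_\beta(t)}<\varepsilon$ for all $t\in V$ and all $(\alpha,\beta)\ge(\alpha_0,\beta_0)$ in the product order. Using directedness of $\Lambda$, choose $\gamma_0\in\Lambda$ with $\gamma_0\ge\alpha_0$ and $\gamma_0\ge\beta_0$; then $\alpha,\beta\ge\gamma_0$ implies $(\alpha,\beta)\ge(\alpha_0,\beta_0)$, which gives the stated condition with $\gamma_0$ in place of $\alpha_0$.

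For the converse, assume the local Cauchy condition and fix non-empty open $U$ and $\varepsilon>0$. Take the resulting $V\subseteq U$ and $\alpha_0\in\Lambda$. Then for any $(\alpha,\beta)\in\Lambda^2$ with $(\alpha,\beta)\ge(\alpha_0,\alpha_0)$ we have $\alpha,\beta\ge\alpha_0$, so $\bigabs{f_\alpha(t)-f_\beta(t)}<\varepsilon$ for all $t\in V$. This is exactly the hypothesis of Theorem~\ref{CK-o-conv} for the positive double net $\bigl(\abs{f_\alpha-f_\beta}\bigr)_{(\alpha,\beta)\in\Lambda^2}$, so this net is uo-null, i.e.\ $(f_\alpha)$ is uo-Cauchy.

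The only nontrivial point, and the single thing to be careful about, is the bookkeeping between single indices in $\Lambda$ and product indices in $\Lambda^2$; both directions of this translation are handled by the observation that cofinality in $\Lambda$ (via a common upper bound $\gamma_0$ of $\alpha_0$ and $\beta_0$) is equivalent to cofinality in $\Lambda^2$. Beyond this, the corollary is essentially a formal consequence of Theorem~\ref{CK-o-conv}, so no substantial obstacle is expected.
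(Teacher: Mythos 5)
Your proof is correct and is exactly the argument the paper intends: the corollary is stated without proof as an immediate consequence of Theorem~\ref{CK-o-conv} applied to the double net $\bigl(\abs{f_\alpha-f_\beta}\bigr)_{(\alpha,\beta)\in\Lambda^2}$, and your index bookkeeping between $\Lambda$ and $\Lambda^2$ is the only point that needs checking, which you handle correctly.
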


Since the space $C_b(\Omega)$ of bounded functions in $C(\Omega)$ is
an order dense and, therefore, regular sublattice of $C(\Omega)$, the
theorem also characterizes uo-convergence in $C_b(\Omega)$. If
$\Omega$ is locally compact, the same is true for $C_0(\Omega)$, the
space of all functions $f$ in $C(\Omega)$ such that for every
$\varepsilon>0$ there exists a compact set $K$ such that
$\bigabs{f(t)}<\varepsilon$ whenever $t\notin K$. It also
follows that a net in $C(\Omega)$ converges in order iff it satisfies
the condition in the theorem and has an order bounded tail; same in
$C_b(\Omega)$ or $C_0(\Omega)$.

The preceding paragraph underlines the importance of understanding
order boundedness in these spaces. Can one characterize order bounded
sets in topological terms? 

It is clear that a set in $C_b(\Omega)$ is order bounded iff it is
bounded in the supremum norm. Next, we characterize order bounded sets
in $C(\Omega)$ under the assumption that $\Omega$ is paracompact.  We
refer the reader to~\cite[IX.4]{Bourbaki:98} for the definition and
proprties of paracompact spaces; we only mention that metrizable and
compact spaces are paracompact.  Recall also that every open cover
$\{U_\alpha\}$ of a paracompact space $\Omega$ admits a
\term{partition of unity}, i.e., a collection $\{h_\alpha\}$ of
non-negative continuous functions such that
$\supp h_\alpha\subseteq U_\alpha$ for every~$\alpha$, for every point
$t\in\Omega$ there exists an open neighborhood $V$ of $t$ such that
all but finitely many functions in the collection are identically zero
on $V$, and $\sum_{\alpha}h_{\alpha}(t)=1$ for every $t\in\Omega$ (the
sum has only finitely many non-zero terms).

\begin{proposition}
  Suppose that $\Omega$ is paracompact Hausdorff space and
  $A\subseteq C(\Omega)_+$. Then $A$ is order bounded in $C(\Omega)$
  iff it is locally bounded, i.e., for every $t\in\Omega$ there exists
  an open neighborhood $U$ of $t$ and a real number $r\ge 0$ such that
  $f(s)\le r$ for all $f\in A$ and $s\in U$.
\end{proposition}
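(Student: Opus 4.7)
The plan is to prove both implications separately. The forward implication is immediate: if $A$ is order bounded, pick $f\in C(\Omega)_+$ with $g\le f$ for every $g\in A$; then for each $t\in\Omega$, continuity of $f$ gives an open neighborhood $U$ of $t$ on which $f<f(t)+1$, so $r:=f(t)+1$ works uniformly for all $g\in A$.

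For the converse, I would use paracompactness to glue the local bounds into a global continuous bound. By hypothesis, for each $t\in\Omega$ choose an open neighborhood $U_t$ of $t$ and a constant $r_t\ge 0$ such that $f(s)\le r_t$ for all $f\in A$ and $s\in U_t$. The family $\{U_t\}_{t\in\Omega}$ is an open cover of~$\Omega$; paracompactness yields a partition of unity $\{h_\alpha\}$ subordinate to this cover, so that for each $\alpha$ there is an index $t_\alpha$ with $\supp h_\alpha\subseteq U_{t_\alpha}$. Define
\[
  f:=\sum_\alpha r_{t_\alpha}h_\alpha.
\]
The local-finiteness property of the partition of unity guarantees that this sum has only finitely many nonzero terms in a neighborhood of each point, so $f$ is a well-defined continuous function on $\Omega$.

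It remains to check that $g\le f$ for every $g\in A$. Fix $s\in\Omega$; only finitely many $\alpha$ satisfy $h_\alpha(s)>0$, and for each such $\alpha$ we have $s\in\{h_\alpha>0\}\subseteq\supp h_\alpha\subseteq U_{t_\alpha}$, so $g(s)\le r_{t_\alpha}$. Using $\sum_\alpha h_\alpha(s)=1$, we compute
\[
  g(s)=\sum_\alpha g(s)h_\alpha(s)\le\sum_\alpha r_{t_\alpha}h_\alpha(s)=f(s),
\]
which proves order boundedness of $A$ in $C(\Omega)$.

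The only potentially delicate point is the continuity of $f$: one must invoke the local-finiteness clause of the partition of unity (not just the support condition) to reduce the formally infinite sum to a finite one on a neighborhood of each point, whence $f$ is continuous. Once this is in hand, the rest is a clean convex-combination estimate, and the two implications together give the characterization.
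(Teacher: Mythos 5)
Your proof is correct and follows essentially the same strategy as the paper's: extract an open cover from the local bounds, take a subordinate partition of unity, and dominate $A$ by the weighted sum $\sum_\alpha r_{t_\alpha}h_\alpha$ via the same convex-combination estimate. The only cosmetic difference is that the paper first passes to the countable cover $U_n=\bigcup\{U \text{ open}\mid A\le n \text{ on } U\}$ with integer weights, whereas you work directly with the point-indexed cover $\{U_t\}$; both are valid.
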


\begin{proof}
  The forward implication is obvious. To prove the converse, for every
  $n\in\mathbb N$, let $U_n$ be the union of all open sets $U$ such
  that $f(t)\le n$ for all $f\in A$ and $t\in U$. By the
  assumption, $\{U_n\}$ forms an open cover of $\Omega$. Let $\{h_n\}$
  be a corresponding partition of unity. For every $t$, put
  $g(t)=\sum_{n=1}^\infty nh_n(t)$. Note that for every $t$ there
  exists a neighborhood $V$ of $t$ such that only finitely many
  $h_n$'s are non-zero on $V$; it follows that $g$ is continuous. For
  every $t\in\Omega$ and $f\in A$, if $h_n(t)>0$ then $t\in U_n$ and,
  therefore, $f(t)\le n$. It follows that
  \begin{displaymath}
    g(t)=\sum_{n=1}^\infty nh_n(t)
    =\sum_{h_n(t)>0}nh_n(t)
    \ge\sum_{h_n(t)>0}f(t)h_n(t)=f(t).
  \end{displaymath}
\end{proof}

\begin{proposition}
  Suppose that $\Omega$ is a locally compact Hausdorff space and
  $A\subseteq C_0(\Omega)_+$. Then $A$ is order bounded iff $A$ is
  uniformly bounded and for every $\varepsilon>0$ there exists a
  compact set $K$ such that $f(t)<\varepsilon$ whenever $f\in A$ and
  $t\notin K$.
\end{proposition}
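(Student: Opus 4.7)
The forward direction is immediate: if $f \le g$ for all $f \in A$ with $g \in C_0(\Omega)_+$, then $A$ is bounded by $\norm{g}_\infty$, and the compact set witnessing $g \in C_0(\Omega)$ for a given $\varepsilon$ works for every $f \in A$ simultaneously. The interest is in the converse.

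For the converse, let $M = \sup\{f(t) : f \in A,\, t \in \Omega\}$, finite by uniform boundedness. For each $n \in \mathbb{N}$, the hypothesis gives a compact set $K_n$ with $f(t) < 2^{-n}$ for all $f \in A$ and $t \notin K_n$; replacing $K_n$ by $K_1 \cup \dots \cup K_n$, I may assume $K_1 \subseteq K_2 \subseteq \cdots$. Since $\Omega$ is locally compact Hausdorff, the standard Urysohn-type result produces, for each $n$, a function $\phi_n \in C(\Omega)$ with compact support such that $0 \le \phi_n \le 1$ and $\phi_n \equiv 1$ on $K_n$. I will then define
\[
 g = M\phi_1 + \sum_{n=2}^{\infty} 2^{-(n-1)} \phi_n
\]
and verify that $g$ is a suitable order bound.

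The series converges uniformly since $\sum 2^{-(n-1)}$ converges and each $\phi_n$ is bounded by $1$, so $g \in C(\Omega)_+$. To see $g \in C_0(\Omega)$, given $\varepsilon > 0$ choose $N$ with $\sum_{n>N} 2^{-(n-1)} < \varepsilon$, and let $S = \supp\phi_1 \cup \cdots \cup \supp\phi_N$, which is compact; off $S$ the first $N$ terms vanish, leaving $g(t) \le \sum_{n>N} 2^{-(n-1)} < \varepsilon$. To check $g \ge f$ for every $f \in A$: on $K_1$, $g \ge M\phi_1 = M \ge f$; for $t \in K_n \setminus K_{n-1}$ with $n \ge 2$, since $t \in K_m$ for all $m \ge n$ we have $\phi_m(t) = 1$, hence $g(t) \ge \sum_{m \ge n} 2^{-(m-1)} = 2^{-(n-2)} > 2^{-(n-1)} > f(t)$; and for $t \notin \bigcup_n K_n$ one has $f(t) < 2^{-n}$ for every $n$, so $f(t) = 0$.

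The main obstacle, and the reason a single Urysohn function will not suffice, is balancing two requirements at once: $g$ must dominate every $f \in A$ (which forces $g$ to be of size $M$ on $K_1$) while simultaneously lying in $C_0(\Omega)$ (which forces $g$ to decay uniformly at infinity). The geometric-series weights $2^{-(n-1)}$ are chosen precisely so that on the annulus $K_n \setminus K_{n-1}$ the tail $\sum_{m \ge n}$ still dominates the available bound $2^{-(n-1)}$ on $A$ there, while the sum as a whole decays in the $C_0$ sense; any choice of $a_n$ with $\sum_{m \ge n} a_m \ge \varepsilon_{n-1}$ and $a_n \to 0$ summably would work equally well.
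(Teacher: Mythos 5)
Your proof is correct and follows essentially the same route as the paper's: an increasing exhaustion by compact sets $K_n$ on which the tail bound $2^{-n}$ holds, Urysohn bump functions equal to $1$ on each, and a geometrically weighted sum verified to dominate $A$ annulus by annulus while remaining in $C_0(\Omega)$. The only differences are cosmetic (you keep the bound $M$ explicit rather than normalizing to $1$, and you justify continuity and $C_0$-membership by uniform convergence instead of via interpolating compacts $L_n$).
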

  
\begin{proof}
  The forward implication is trivial. To show the converse, suppose
  that $A$ satisfies the conditions in the theorem; we will show that
  $A$ is order bounded. Since $A$ is uniformly bounded, we may assume
  WLOG that $f(t)\le 1$ for all $f\in A$ and $t\in\Omega$. For every
  $n\in\mathbb N$, we find a compact set $K_n$ such that
  $f(t)<\frac{1}{2^n}$ whenever $f\in A$ and $t\notin K_n$.

  Recall that for every compact set $K$ in a locally compact space one
  can find a compact set $L$ such that $K\subseteq\Int L$. Applying
  this fact inductively, we find a sequence of compact sets $(L_n)$
  such that $K_n\subseteq L_n\subseteq\Int L_{n+1}$ for every $n$.
  Find a function $g_n\in C(\Omega)_+$ such that $g_n$
  equals 1 on $L_n$ and vanishes outside of $\Int L_{n+1}$. For every
  $t\in\Omega$, put $g(t)=\sum_{n=1}^\infty 2^{-n+1}g_n(t)$. Note that
  this sum is finite on each $L_n$ and vanishes outside
  $\bigcup_{n=1}^\infty L_n$; it is easy to see that $g\in
  C_0(\Omega)$.

  It is left to show that $A\le g$. Let $f\in A$. If $t\in L_1$ then
  $f(t)\le 1\le g_1(t)\le g(t)$. If $t\in L_{n}\setminus L_{n-1}$ for
  some $n>1$ then $t\notin K_{n-1}$, hence
  $f(t)<2^{-n+1}=2^{-n+1}g_n(t)\le g(t)$. Finally, if
  $t\notin\bigcup_{n=1}^\infty L_n$ then $f(t)=0=g(t)$.
\end{proof}

\bigskip

We now return to order convergence.  For the next few results, we will
assume that $\Omega$ is a completely regular Hausdorff Baire space. In
particular, $\Omega$ could be a locally compact Hausdorff space. The
equivalence
\eqref{inf-meagre-inf}$\Leftrightarrow$\eqref{inf-meagre-dense} in the
following lemma is a part of Corollary~4.14 in~\cite{Bilokopytov}. Cf
also Proposition~3.2 in~\cite{Kandic:19}.

\begin{lemma}\label{inf-meagre}
  Let $\Omega$ be a completely regular Hausdorff Baire space.
  For $G\subseteq C(\Omega)_+$, TFAE:
  \begin{enumerate}
  \item\label{inf-meagre-inf} $\inf G=0$;
  \item\label{inf-meagre-dense} There exists a dense set $D$ such that
    $\inf_{g\in G}g(t)=0$ for every $t\in D$;
  \item\label{inf-meagre-com} There exists a co-meagre set $D$ such that
    $\inf_{g\in G}g(t)=0$ for every $t\in D$;
  \end{enumerate}
\end{lemma}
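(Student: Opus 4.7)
The plan is to prove the cycle \eqref{inf-meagre-com} $\Rightarrow$ \eqref{inf-meagre-dense} $\Rightarrow$ \eqref{inf-meagre-inf} $\Rightarrow$ \eqref{inf-meagre-com}, using Lemma~\ref{CK-set-inf} as the bridge between the order-theoretic condition \eqref{inf-meagre-inf} and the pointwise conditions. The implication \eqref{inf-meagre-com} $\Rightarrow$ \eqref{inf-meagre-dense} is immediate from the Baire hypothesis, since every co-meagre set in a Baire space is dense. For \eqref{inf-meagre-dense} $\Rightarrow$ \eqref{inf-meagre-inf}, I would verify Lemma~\ref{CK-set-inf}\eqref{CK-set-inf-t}: given a non-empty open $U$ and $\varepsilon>0$, density of $D$ yields some $t\in U\cap D$, and the pointwise infimum condition at $t$ then produces $g\in G$ with $g(t)<\varepsilon$.

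The substantive direction is \eqref{inf-meagre-inf} $\Rightarrow$ \eqref{inf-meagre-com}; the idea is to manufacture the co-meagre set as a countable intersection of open dense sets. For each $n\in\mathbb N$, set
\[
  U_n \;=\; \bigcup_{g\in G} g^{-1}\bigl([0,1/n)\bigr),
\]
which is open by continuity of the members of $G$. Lemma~\ref{CK-set-inf}\eqref{CK-set-inf-t} applied to $\inf G=0$ tells us that $U_n$ meets every non-empty open subset of $\Omega$, so $U_n$ is dense. Then $D=\bigcap_{n=1}^{\infty} U_n$ has complement equal to a countable union of nowhere dense closed sets, hence $D$ is co-meagre. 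By construction, for every $t\in D$ and every $n$ there is some $g\in G$ with $g(t)<1/n$, so $\inf_{g\in G}g(t)=0$.

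I do not anticipate a real obstacle. The only subtlety worth flagging is that the co-meagreness of $D$ is intrinsic to its construction and does not itself use the Baire assumption; Baire enters only through \eqref{inf-meagre-com} $\Rightarrow$ \eqref{inf-meagre-dense}, where it ensures that the co-meagre witness is actually non-empty and, more strongly, dense (otherwise \eqref{inf-meagre-com} could be vacuously true whenever $\Omega$ is itself meagre).
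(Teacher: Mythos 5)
Your proposal is correct and follows essentially the same route as the paper: the sets $U_n=\bigcup_{g\in G}g^{-1}([0,1/n))$ are exactly the paper's $W_n=\bigcup_{g\in G}\{g<\tfrac1n\}$, their density is obtained from Lemma~\ref{CK-set-inf} in the same way, and the remaining implications are handled identically. Your closing observation about where the Baire hypothesis actually enters is accurate.
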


\begin{proof}
  \eqref{inf-meagre-dense}$\Rightarrow$\eqref{inf-meagre-inf} by
  Lemma~\ref{CK-set-inf}.
  \eqref{inf-meagre-com}$\Rightarrow$\eqref{inf-meagre-dense} because
  every co-meagre set in a Baire space is dense.  To prove
  \eqref{inf-meagre-inf}$\Rightarrow$\eqref{inf-meagre-com}, assume
  that $\inf G=0$. For $n\in\mathbb N$, put
  $W_n=\bigcup_{g\in G}\{g<\frac1n\}$. Clearly, $W_n$ is open. For
  every non-empty open set $U$, Lemma~\ref{CK-set-inf} yields a point
  $t\in U$ and $g\in G$ with $g(t)<\frac1n$; hence $t\in W_n$. This
  means that $W_n$ is dense. Put $D:=\bigcap_{n=1}^\infty W_n$, then
  $D$ is co-meagre. Let $t\in D$, then for every
  $n\in\mathbb N$ we have $t\in W_n$, hence
  $\inf_{g\in G}g(t)\le\frac1n$. It follows that
  $\inf_{g\in G}g(t)=0$.
\end{proof}

The following result was proved in~\cite{vanderWalt:18} under extra
an assumption and for a different definition of order convergence.
The forward implication is a part of Corollary~4.14 in~\cite{Bilokopytov}. 

\begin{theorem}\label{CK-oconv-meag}
  Let $\Omega$ be a completely regular Hausdorff Baire space and
  $(f_\alpha)$ a net in $C(\Omega)$.  If $f_\alpha\goesuo f$ then
  $f_\alpha$ converges to $f$ pointwise on a co-meagre set. The
  converse is true for countable nets (in particular, for sequences).
\end{theorem}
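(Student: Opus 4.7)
The plan is to handle each direction separately, using Theorem~\ref{CK-o-conv} and Lemma~\ref{oconv-dom-set} as the main vector-lattice tools, and Lemma~\ref{inf-meagre} plus the Baire property as the main topological tools. Throughout, I would replace $(f_\alpha)$ by the positive net $\bigl(\abs{f_\alpha-f}\bigr)$ and just prove the statement for a positive net uo-converging to $0$.

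For the forward implication, suppose $f_\alpha\goesuo 0$ with $f_\alpha\ge 0$. Then $f_\alpha\wedge\one\goeso 0$, so Lemma~\ref{oconv-dom-set} yields a set $G\subseteq C(\Omega)_+$ with $\inf G=0$ such that each $g\in G$ dominates a tail of $(f_\alpha\wedge\one)$. Apply Lemma~\ref{inf-meagre} to obtain a co-meagre set $D$ with $\inf_{g\in G}g(t)=0$ for every $t\in D$. For any $t\in D$ and any $\varepsilon\in(0,1)$, choose $g\in G$ with $g(t)<\varepsilon$ and then $\alpha_0$ with $f_\alpha\wedge\one\le g$ beyond $\alpha_0$; this gives $f_\alpha(t)<\varepsilon$ eventually, i.e., $f_\alpha(t)\to 0$.

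For the converse in the countable case, suppose $(f_n)$ is a sequence in $C(\Omega)$ with $f_n\to f$ pointwise on a co-meagre set $D$. I would verify the condition of Theorem~\ref{CK-o-conv} applied to $\bigl(\abs{f_n-f}\bigr)$: fix a non-empty open $U$ and $\varepsilon>0$ and produce an open non-empty $V\subseteq U$ and $n_0$ with $\abs{f_n-f}<\varepsilon$ on $V$ for $n\ge n_0$. For each $n$ set
\begin{displaymath}
  E_n=\bigcap_{k\ge n}\bigl\{t\in\Omega\mid\bigabs{f_k(t)-f(t)}\le\tfrac{\varepsilon}{2}\bigr\},
\end{displaymath}
which is closed by continuity of each $\abs{f_k-f}$. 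Pointwise convergence on $D$ gives $D\subseteq\bigcup_n E_n$, so $\Omega\setminus\bigcup_n E_n$ is meagre.

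The main obstacle is extracting an open neighborhood from this covering, which is where the Baire assumption is essential and where the countability of the net is used (to index $E_n$ by $\mathbb{N}$). Since $\Omega$ is Baire, the non-empty open set $U$ is non-meagre, so $U\cap\bigcup_n E_n$ is non-meagre, and therefore some $U\cap E_{n_0}$ is non-meagre. Being closed in $U$, such a set cannot be nowhere dense, so $\Int(U\cap E_{n_0})$ is a non-empty open subset of $U$; take this to be $V$. Then for all $n\ge n_0$ and all $t\in V$ we have $\abs{f_n(t)-f(t)}\le\tfrac{\varepsilon}{2}<\varepsilon$, verifying the hypothesis of Theorem~\ref{CK-o-conv} and concluding $f_n\goesuo f$.
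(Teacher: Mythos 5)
Your proposal is correct and follows essentially the same route as the paper: the forward direction combines Lemma~\ref{oconv-dom-set} with Lemma~\ref{inf-meagre}, and the converse verifies the local condition of Theorem~\ref{CK-o-conv} by a Baire category argument. The only (cosmetic) difference is in the converse, where you work with the closed sets $E_n=\bigcap_{k\ge n}\{\abs{f_k-f}\le\varepsilon/2\}$ and the fact that a non-meagre set that is closed in $U$ has non-empty interior, whereas the paper works with the complementary open sets $W_m=\bigcup_{n\ge m}\{f_n>\varepsilon\}$ and the nowhere-density of their boundaries --- two equivalent phrasings of the same category argument.
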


\begin{proof}
  WLOG, $f=0$. Suppose that $f_\alpha\goesuo 0$. Then
  $\abs{f_\alpha}\wedge\one\goeso 0$. There exists a net
  $(g_\gamma)$ such that $g_\gamma\downarrow 0$ and for every $\gamma$
  there exists $\alpha_0$ such that $\abs{f_\alpha}\wedge\one\le g_\gamma$
  whenever $\alpha\ge\alpha_0$. Put $G=\{g_\gamma\}$. Then
  $\inf G=\inf g_\gamma=0$. By Lemma~\ref{inf-meagre}, there exists a
  co-meagre set $D$ such that for every $t\in D$ we have
  $0=\inf_{g\in G}g(t)=\inf_\gamma g_\gamma(t)$ and, therefore,
  $\lim_\alpha f_\alpha(t)=0$.

  For simplicity, we prove the converse implication for sequences;
  extending the proof to countable nets is straightforward.  Suppose
  that a sequence $(f_n)$ converges to zero on a co-meagre set~$D$. We
  will use Theorem~\ref{CK-o-conv} to show that $f_n\goesuo 0$. WLOG,
  $f_n\ge 0$ for all~$n$. Fix an open non-empty set $U$ and
  $\varepsilon>0$. For each $m$, put
  $W_m=\bigcup_{n\ge m}\{f_n>\varepsilon\}$. Clearly, $W_m$ is
  open. If $t\in\bigcap_mW_m$ then for every $m$ there exists $n\ge m$
  with $f_n(t)>\varepsilon$, hence $t\notin D$. This yields that
  $\bigcap_mW_m$ is contained in $\Omega\setminus D$, hence is
  meagre. Since $W_m$ is open, $\partial W_m$ is nowhere dense for
  every~$m$; we conclude that $\bigcup_m\partial W_m$ is meagre. It
  follows from
  \begin{displaymath}
    \bigcap_m\overline{W_m}\subset
    \Bigl(\bigcap_mW_m\Bigr)\cup\Bigl(\bigcup_m\partial W_m\Bigr)
  \end{displaymath}
  that $\bigcap_m\overline{W_m}$ is meagre, so that its complement
  \begin{math}
    \bigcup_m(\Omega\setminus\overline{W_m})
  \end{math}
  is co-meagre, hence dense, and, therefore, meets~$U$. It follows that
  $U\cap(\Omega\setminus\overline{W_m})\ne\varnothing$ for some~$m$. Denote
  this intersection by~$V$. For every $t\in V$, it follows from
  $t\notin W_m$ that $f_n(t)\le\varepsilon$ for all $n\ge m$. Hence,
  the condition in Theorem~\ref{CK-o-conv} is satisfied.
\end{proof}

\begin{example}
  The following example shows that one cannot replace ``co-meagre''
  with ``dense'' in Theorem~\ref{CK-oconv-meag}. Let $(f_n)$ be the
  Schauder system in $C[0,1]$; see,
  e.g.,~\cite[p.~3]{Lindenstrauss:77}.  Let $D$ be the set of all
  dyadic points in $[0,1]$. Then $D$ is dense in $[0,1]$ and
  $\bigl(f_n(t)\bigr)$ is eventually zero for every $t\in D$, so that
  $(f_n)$ converges to zero pointwise on~$D$. Yet, it is easy to see
  that $\sup_{n\ge m}f_n=\one$ in $C[0,1]$ for every~$m$, hence
  $(f_n)$ does not converge to zero in order.
\end{example}

\begin{example}
  We will construct an order bounded net that converges to zero
  pointwise at every point, yet fails to converge in order. This shows
  that the converse implication in Theorem~\ref{CK-oconv-meag}
  generally fails for nets. Let $\Omega=C[0,1]$; let $\Lambda$ be the
  set of all finite subsets of $[0,1]$ containing $0$ and $1$, ordered
  by inclusion. For each $\alpha\in\Lambda$,
  $\alpha=\{t_1,\dots,t_n\}$ with $0=t_0<t_1<\dots<t_n=1$, we define
  $f_\alpha$ as follows: we put $f_\alpha(t_i)=0$ for every $i$, we
  put $f_\alpha$ to be 1 at the midpoints, i.e.,
  $f_\alpha\bigl(\frac{t_{i-1}+t_{i}}{2}\bigr)=1$ for all
  $i=1,\dots,n$, and define $f_\alpha$ linearly in between. It is easy
  to see that $(f_\alpha)$ converges to zero pointwise on $[0,1]$. On
  the other hand, we claim that the supremum of every tail of this net
  in $C[0,1]$ is $\one$, hence the nets fails to converge to zero in
  order. Indeed, fix $\alpha\in\Lambda$, and suppose that
  $h\ge f_\beta$ for all $\beta\ge\alpha$. Let $s\notin\alpha$. Then
  one can find $\beta\supseteq\alpha$ such that $s$ is a midpoint for
  $\beta$. Hence, $h(s)\ge f_\beta(s)=1$. Thus, $h(s)\ge 1$ for all
  $s\notin\alpha$. Since $\alpha$ is a finite set and $h$ is
  continuous, it follows that $h\ge\one$.
\end{example}

\begin{corollary}\label{uo-Cauchy-meagre}
  Let $\Omega$ be a completely regular Hausdorff Baire space.  If a
  net $(f_\alpha)$ is uo-Cauchy then $\lim_\alpha f_\alpha(t)$ exists
  for every $t$ in some co-meagre set. The converse is true for
  countable nets.
\end{corollary}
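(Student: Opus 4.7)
The plan is to deduce both directions of the corollary by applying Theorem~\ref{CK-oconv-meag} to the double net $(f_\alpha - f_\beta)_{(\alpha,\beta)\in\Lambda^2}$, exploiting the fact that, by definition, $(f_\alpha)$ is uo-Cauchy precisely when this double net uo-converges to zero in $C(\Omega)$.

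For the forward implication I would argue as follows. Assume $(f_\alpha)$ is uo-Cauchy, so that the double net $(f_\alpha - f_\beta)$ is uo-null. Since Theorem~\ref{CK-oconv-meag} holds for arbitrary (not necessarily countable) nets in its forward direction, there is a co-meagre set $D\subseteq\Omega$ such that $f_\alpha(t) - f_\beta(t) \to 0$ pointwise on $D$ as $(\alpha,\beta)$ runs through $\Lambda^2$. Fixing $t\in D$ and $\varepsilon>0$, this yields $(\alpha_0,\beta_0)$ with $\bigabs{f_\alpha(t) - f_\beta(t)} < \varepsilon$ whenever $\alpha\ge\alpha_0$ and $\beta\ge\beta_0$; picking any $\gamma\in\Lambda$ that dominates both $\alpha_0$ and $\beta_0$ shows that the scalar net $\bigl(f_\alpha(t)\bigr)$ is Cauchy in $\mathbb{R}$, hence convergent. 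Thus $\lim_\alpha f_\alpha(t)$ exists for every $t\in D$.

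For the converse, suppose $\Lambda$ is countable and $\lim_\alpha f_\alpha(t)$ exists for every $t$ in some co-meagre set $D$. Then the double net $(f_\alpha - f_\beta)$ is indexed by the still-countable directed set $\Lambda^2$ and converges to zero pointwise on $D$. Applying the converse implication of Theorem~\ref{CK-oconv-meag} (valid for countable nets) to this double net yields $(f_\alpha - f_\beta) \goesuo 0$, which is exactly the statement that $(f_\alpha)$ is uo-Cauchy. The only point requiring any care is the preservation of countability when passing from $\Lambda$ to $\Lambda^2$, which is immediate, and the translation between the Cauchy condition for the scalar net $(f_\alpha(t))$ and pointwise convergence of the double net to zero, which is standard; I do not expect any substantive obstacle.
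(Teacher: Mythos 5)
Your proof is correct and is exactly the intended derivation: the paper states this corollary without proof, leaving it as the immediate consequence of applying Theorem~\ref{CK-oconv-meag} to the uo-null (resp.\ pointwise null) double net $(f_\alpha-f_\beta)$ indexed by $\Lambda^2$, together with the completeness of $\mathbb{R}$ for Cauchy nets. Your handling of the countability of $\Lambda^2$ and the passage between the scalar Cauchy condition and convergence of the double net is exactly what is needed.
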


\section{Convergence in $C^\infty(K)$}

Recall that a vector lattice $X$ is \term{universally complete} if it
is order complete and every disjoint set in $X$ has supremum. $X$ is
$\sigma$-\term{universally complete} if it is $\sigma$-order complete
and every disjoint sequence has a supremum. Let $K$ be a compact
Hausdorff space, then $C(K)$ is order complete iff $K$ is extremally
disconnected, i.e., the closure of every open set is open (hence
clopen). If $K$ is an extremally disconnected compact Hausdorff space
then the space $C^\infty(K)$ of all functions from $K$ to
$\overline{\mathbb R}=[-\infty,\infty]$ that are finite except on an
nowhere dense set is a universally complete vector lattice.  Moreover,
every universally complete vector lattice is lattice isomorphic to
$C^\infty(K)$ for some extremally disconnected compact Hausdorff
$K$. Every Archimedean vector lattice $X$ embeds as an order dense
sublattice into a unique universally complete vector lattice $X^u$,
called the \term{universal completion} of~$X$. We refer the reader
to~\cite{Aliprantis:03} for these and further details on $C^\infty(K)$ spaces
and universally complete vector lattices.

\begin{remark}\label{CKinfty}
  It can be easily verified that many of the preceding results,
  including Lemmas~\ref{CK-set-inf} and~\ref{inf-meagre},
  Theorems~\ref{CK-o-conv} and~\ref{CK-oconv-meag}, and
  Corollaries~\ref{CK-uo-Cauchy} and~\ref{uo-Cauchy-meagre} remain
  valid for $C^\infty(K)$. Note that the variant of
  Theorem~\ref{CK-oconv-meag} for $C^\infty(K)$ spaces was proved in
   \cite[XIII, Theorem 2.3.3]{Kantorovich:50}
\end{remark}

A set $A$ in an Archimedean vector lattice $X$ is said to be
\marg{new}
\term{dominable} if it is order bounded in $X^u$. Such sets appear
naturally in many applications. It is, therefore, important to
characterize order bounded sets in universally complete vector
lattices, or, equivalently, in $C^\infty(K)$-spaces. The following
result is somewhat motivated by Lemma~4 in~\cite{vanderWalt:12} and by
\cite[XIII. Theorem 2.3.2]{Kantorovich:50}

\begin{proposition}\label{o-bdd-in-C-infty}
  Let $A$ be a non-empty subset of $C^\infty(K)_+$ for some extremally
  disconnected $K$. $A$ is bounded above iff for every
  non-empty clopen subset $U$ of $K$ there exists a non-empty clopen
  $V\subseteq U$ and a non-negative real $r$ such that $f(t)\le r$ for all
  $f\in A$ and $t\in V$.
\end{proposition}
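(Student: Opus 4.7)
The forward direction is short. If $A \le g$ for some $g \in C^\infty(K)_+$, then $g$ is finite off a nowhere dense set, so given a non-empty clopen $U \subseteq K$ I would pick $t_0 \in U$ with $g(t_0) < \infty$, choose an integer $n > g(t_0)$, and set $W := U \cap \{g < n\}$. This is a non-empty open subset of $U$, so by extremal disconnectedness $V := \overline{W}$ is clopen and contained in $\overline{U} = U$. For each $f \in A$ we have $f \le g < n$ on $W$; since $f$ and $g$ are continuous $\overline{\mathbb R}$-valued maps and $W$ is dense in $V$, passing to limits along nets in $W$ (which preserves $\le$ in $\overline{\mathbb R}$) gives $f \le n$ on $V$, so $r = n$ works.

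For the converse I would invoke Zorn's lemma to select a maximal pairwise disjoint family $\{V_i\}_{i \in I}$ of non-empty clopen sets, each admitting a constant $r_i \ge 0$ such that $f(t) \le r_i$ for all $f \in A$ and $t \in V_i$. The hypothesis applied to $U = K$ supplies a starting member. Maximality forces $\bigcup_i V_i$ to be dense in $K$, for otherwise $K \setminus \overline{\bigcup_i V_i}$ is a non-empty open set, and since clopen sets form a base of the topology in an extremally disconnected compact Hausdorff space, this open set contains a non-empty clopen $U'$; applying the hypothesis to $U'$ would produce a clopen set disjoint from every $V_i$ carrying its own bound, contradicting maximality.

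To conclude, the indicator functions $\chi_{V_i} \in C(K)_+$ are pairwise disjoint, and $C^\infty(K)$ is universally complete, so the lattice supremum $g := \bigvee_i r_i\chi_{V_i}$ exists in $C^\infty(K)_+$ and satisfies $g(t) = r_i$ for every $t \in V_i$. Thus for each $f \in A$ the inequality $f \le g$ holds pointwise on the dense set $\bigcup_i V_i$, and continuity of $f$ and $g$ into $\overline{\mathbb R}$ propagates it to all of $K$, giving $A \le g$ in $C^\infty(K)$. The delicate step is the identification of $g$ with $r_i$ on all of $V_i$ (not merely on a dense subset thereof); this rests on the standard description of suprema of disjoint families in $C^\infty(K)$ as $\overline{\mathbb R}$-valued continuous extensions from the dense open support, and is where extremal disconnectedness of $K$ and the $C^\infty(K)$ framework (as opposed to $C(K)$) are essential.
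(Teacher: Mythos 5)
Your forward direction is the same observation the paper makes (any single upper bound $g\in C^\infty(K)_+$ already satisfies the local boundedness condition, being finite off a nowhere dense set), just written out in detail. For the converse you take a genuinely different route. The paper identifies $\overline{\mathbb R}$ with $[-1,1]$ as posets, so that $C(K,\overline{\mathbb R})$ is order complete as a partially ordered set; it forms $h:=\sup A$ there and uses the hypothesis to rule out $h$ being infinite on a non-empty clopen set (replacing $h$ by the constant $r$ on such a $V$ would produce a strictly smaller upper bound of $A$). You instead construct an explicit upper bound: a Zorn-maximal disjoint family of clopen sets $V_i$ carrying local bounds $r_i$, whose union is dense by maximality together with the hypothesis, and then the disjoint supremum $g=\bigvee_i r_i\chi_{V_i}$, which exists by universal completeness. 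Both arguments are correct. The paper's is shorter and needs only order completeness of the bounded poset $C\bigl(K,[-1,1]\bigr)$; yours is more constructive but leans on two standard facts that deserve a word of justification: that every non-empty open subset of an extremally disconnected compact Hausdorff $K$ contains a non-empty clopen set (take the closure of a cozero set sitting inside it), and that $g$ really equals $r_i$ on all of $V_i$. The latter --- the step you rightly flag as delicate --- is most cleanly dispatched not by continuous extension from the dense open support but by band projections: $P_{V_i}$ is order continuous, so $P_{V_i}g=\sup_j P_{V_i}\bigl(r_j\chi_{V_j}\bigr)=r_i\chi_{V_i}$ by disjointness. With that in place, your final step ($f\le g$ on the dense set $\bigcup_i V_i$, hence everywhere because $\{f\le g\}$ is closed in $K$) goes through.
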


\begin{proof}
  The forward implication follows from the observation that every
  individual function in $C^\infty(K)_+$ satisfies the condition.


  Suppose now that $A$ satisfies the condition; we need to show that
  it is order bounded. As a partially ordered set, $\overline{\mathbb R}$ is
  order isomorphic to $[-1,1]$, which induces an order isomorphism
  between $C\bigl(K,\overline{\mathbb R}\bigr)$ and
  $C\bigl(K,[-1,1]\bigr)$. Hence $C(K,\overline{\mathbb R})$ is order
  complete as a partially ordered set. It follows that $h:=\sup A$ in
  $C(K,\overline{\mathbb R})$ exists. It suffices to show that
  $h\in C^\infty(K)$. Suppose not. Then there exists a non-empty
  clopen set $U$ such that $h$ equals infinity on~$U$. By assumption,
  we can find a non-empty clopen $V\subseteq U$ and $r\ge 0$ such that
  $f(t)\le r$ for all $f\in A$ and $t\in V$. Put
  \begin{displaymath}
    g(t)=
    \begin{cases}
      h(t) & \mbox{ when }t\notin V\\
      r & \mbox{ when }t\in V.
    \end{cases}
  \end{displaymath}
  Then $f\le g$ for all $f\in A$, hence $h\le g$, which contradicts
  $h$ being infinite on~$V$.
\end{proof}

We now present a simple proof of a theorem due to Grobler,
see~\cite{Gao:17}. We say that a net $(x_\alpha)_{\alpha\in\Lambda}$
has \term{finite heads} if the set
$\{\alpha\in\Lambda\mid\alpha\le\beta\}$ is finite for every
$\beta\in\Lambda$. Clearly, every sequence has finite heads.

\begin{proposition}\label{fin-heads}
  In a universally complete vector lattice, order and uo-convergences
  agree for nets with finite heads.
\end{proposition}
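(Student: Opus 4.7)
The implication $x_\alpha\goeso 0\Rightarrow x_\alpha\goesuo 0$ holds in every vector lattice, so the plan is to prove the converse: given a net $(x_\alpha)_{\alpha\in\Lambda}$ with finite heads in a universally complete vector lattice $X$ with $x_\alpha\goesuo 0$, deduce $x_\alpha\goeso 0$. By the representation theorem for universally complete vector lattices I may assume $X=C^\infty(K)$ for an extremally disconnected compact Hausdorff space $K$, which is automatically Baire.

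Since $x_\alpha\goeso 0$ is equivalent to $x_\alpha\goesuo 0$ together with the existence of an order-bounded tail, it suffices to produce $\alpha_0\in\Lambda$ such that $A=\{|x_\beta|:\beta\ge\alpha_0\}$ is order bounded in $C^\infty(K)$. By Proposition~\ref{o-bdd-in-C-infty}, this reduces to showing that for every non-empty clopen $U\subseteq K$ there exist a non-empty clopen $V\subseteq U$ and $r\ge 0$ with $|x_\beta(t)|\le r$ for all $\beta\ge\alpha_0$ and all $t\in V$. The main local input comes from the $C^\infty(K)$-version of Theorem~\ref{CK-o-conv} (valid by Remark~\ref{CKinfty}): for every non-empty clopen $U$ and every $\varepsilon>0$ there are a non-empty clopen $V_{U,\varepsilon}\subseteq U$ and an index $\alpha_{U,\varepsilon}\in\Lambda$ such that $|x_\beta|<\varepsilon$ on $V_{U,\varepsilon}$ for all $\beta\ge\alpha_{U,\varepsilon}$.

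The delicate part, and the main obstacle, is the consolidation step: the $U$-dependent indices $\alpha_{U,\varepsilon}$ must be replaced by a single $\alpha_0$ valid for every $U$. A priori these indices may be cofinal in $\Lambda$, with no single $\alpha_0$ dominating all of them. The finite heads hypothesis is essential precisely here: it bounds the past of each candidate index, so that finitely many early functions $|x_\beta|$ for $\beta$ in any interval $\{\beta:\alpha_0\le\beta\le\alpha_1\}$ can be uniformly dominated on a suitable shrunken clopen (since each lies in $C^\infty(K)$, finite on a dense open, and a finite intersection of dense opens meets every non-empty clopen). A natural strategy is to choose, via Zorn's lemma, a maximal family of pairs $(V_i,\alpha_i)$ with pairwise disjoint clopen $V_i$ and compatible dominating indices, so that by maximality $\bigcup_i V_i$ is dense (using extremal disconnectedness of $K$), and then use finite heads to arrange that the chosen $\alpha_i$ admit a common upper bound $\alpha_0$. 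Once this is achieved, Proposition~\ref{o-bdd-in-C-infty} certifies that $A$ is order bounded, and combining with the uo-convergence of $(x_\alpha)$ yields $x_\alpha\goeso 0$.
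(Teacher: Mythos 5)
Your reduction to producing an order bounded tail and your appeal to Proposition~\ref{o-bdd-in-C-infty} together with the $C^\infty(K)$-version of Theorem~\ref{CK-o-conv} are the right ingredients, but the ``consolidation step'' you identify as the crux is both unachievable and unnecessary, and the argument you sketch for it fails. Finite heads does not provide a common upper bound for an infinite family of indices: already for $\Lambda=\mathbb N$ every head is finite, yet an infinite set of indices need not be bounded above. Worse, if your maximal family $(V_i,\alpha_i)$ did admit a common upper bound $\alpha_0$, then every $\abs{x_\beta}$ with $\beta\ge\alpha_0$ would be at most $\varepsilon$ on the dense set $\bigcup_iV_i$, hence at most $\varepsilon$ everywhere by continuity; that is, some tail would be \emph{uniformly} bounded. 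This is false in general: take disjoint non-empty clopen sets $A_n$ in $K$ and $x_n=n\one_{A_n}$. This sequence is order null (each tail has a supremum by universal completeness, and these suprema decrease to $0$), but no tail is uniformly bounded, and the indices $\alpha_{A_k,\varepsilon}$ produced by Theorem~\ref{CK-o-conv} are necessarily cofinal. So no single $\alpha_0$ can serve all $U$ simultaneously, and Zorn's lemma cannot manufacture one.

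What you are missing is that Proposition~\ref{o-bdd-in-C-infty} is a \emph{local} criterion for order boundedness of the whole set $\{\abs{x_\alpha}\mid\alpha\in\Lambda\}$, so the index is allowed to depend on $U$ and no consolidation is needed. Given a non-empty clopen $U$, Theorem~\ref{CK-o-conv} (via Remark~\ref{CKinfty}) yields a non-empty clopen $V\subseteq U$ and an index $\alpha_0=\alpha_0(U)$ with $\abs{x_\alpha}\le 1$ on $V$ for all $\alpha\ge\alpha_0$; the finitely many functions $x_\alpha$ with $\alpha\le\alpha_0$ --- and this is the only place where finite heads is used --- are each finite off a nowhere dense set, so $V$ can be shrunk to a non-empty clopen $W$ on which they are uniformly bounded by some $r$. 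This verifies the hypothesis of Proposition~\ref{o-bdd-in-C-infty}, the net is order bounded, and order boundedness together with uo-convergence gives order convergence. Your parenthetical remark about dominating finitely many early terms on a shrunken clopen shows you had this mechanism in hand; it should be applied locally, per $U$, rather than in the service of a global index.
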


\begin{proof}
  Let $X$ be a universally complete vector lattice and $(x_\alpha)$ a
  net in $X$ with finite heads. We know that order convergence implies
  uo-convergence, so we only need to prove that if $x_\alpha\goesuo x$
  then $x_\alpha\goeso x$. WLOG, $x=0$ and $x_\alpha\ge 0$ for every
  $\alpha$. It suffices to show that $(x_\alpha)$ is order bounded.

  We may identify $X$ with a $C^\infty(K)$ space for some extremally
  disconnected compact Hausdorff~$K$. We will use
  Proposition~\ref{o-bdd-in-C-infty}.  Fix an open non-empty set
  $U\subseteq K$.  It follows from Theorem~\ref{CK-o-conv} and
  Remark~\ref{CKinfty} that there exists an open non-empty
  $V\subseteq U$ and $\alpha_0\in\mathbb N$ such that $x_\alpha$ is
  bounded by 1 on $V$ for all $\alpha\ge\alpha_0$. It now suffices to
  show that the head $(x_\alpha)_{\alpha\le\alpha_0}$ is uniformly
  bounded on an open subset of~$V$. Since this head has only finitely
  many terms, and since every function in $C^\infty(K)$ is continuous
  and finite except on a nowhere dense set, we can find an open
  non-empty $W\subseteq V$ and $r>0$ such that for every
  $\alpha\le\alpha_0$ the function $x_\alpha$ is bounded by $r$
  on~$W$. It follows from Proposition~\ref{o-bdd-in-C-infty} that
  $(x_\alpha)$ is order bounded.
\end{proof}

\begin{theorem}\label{Grobler}[Grobler]
  For a sequence $(x_n)$ in a $\sigma$-universally complete vector lattice, TFAE
  \begin{enumerate}
  \item\label{G-uo-con} $(x_n)$ is uo-convergent;
  \item\label{G-o-con} $(x_n)$ is order convergent;
  \item\label{G-uo-Cau} $(x_n)$ is uo-Cauchy;
  \item\label{G-o-Cau} $(x_n)$ is order Cauchy.
  \end{enumerate}
\end{theorem}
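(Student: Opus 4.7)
The implications (ii)$\Rightarrow$(i), (ii)$\Rightarrow$(iv), (i)$\Rightarrow$(iii), and (iv)$\Rightarrow$(iii) are standard in any Archimedean vector lattice. It therefore suffices to establish (iii)$\Rightarrow$(ii); the remaining equivalences then follow by composition with the standard ones.

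My plan is to transfer the problem into the universal completion $X^u$ and invoke Proposition~\ref{fin-heads}. Given a uo-Cauchy sequence $(x_n)$ in $X$, regularity of the inclusion $X\hookrightarrow X^u$ (which follows from $X$ being order dense in $X^u$) transfers the uo-Cauchy property to $X^u$: the double net $(x_n-x_m)_{(n,m)\in\mathbb N^2}$, with the product order on $\mathbb N^2$, is uo-null in $X^u$. Because this index set has finite heads, Proposition~\ref{fin-heads} applied inside the universally complete lattice $X^u$ yields $x_n-x_m\goeso 0$, so $(x_n)$ is order Cauchy in $X^u$. Order completeness of $X^u$ then provides $x\in X^u$ with $x_n\goeso x$; concretely, $y_n:=\sup_{m\ge n}x_m$ and $z_n:=\inf_{m\ge n}x_m$ satisfy $y_n\downarrow x$ and $z_n\uparrow x$ in $X^u$ together with $\abs{x_n-x}\le y_n-z_n$.

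What remains is to show $x\in X$ and that the convergence takes place in $X$. The key claim, where $\sigma$-universal completeness enters, is that each $y_n$ and each $z_n$ already lies in $X$: the sequence $(x_m)_{m\ge n}\subseteq X$ is order bounded in $X^u$, and $\sigma$-UC is meant to guarantee that its $X^u$-supremum and $X^u$-infimum land in $X$. Given this, $(y_n)$ is a decreasing sequence in $X$ bounded below by $z_1\in X$, so $\sigma$-order completeness of $X$ produces $\inf y_n\in X$, which by regularity coincides with the $X^u$-infimum $x$; hence $x\in X$. Then $y_n-z_n\in X_+$ with $y_n-z_n\downarrow 0$ in $X$, providing the dominating sequence that certifies $x_n\goeso x$ in $X$.

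The main obstacle is the lifting claim itself: deducing from the $\sigma$-UC hypothesis that a countable $X^u$-supremum of elements of $X$, order bounded in $X^u$, must belong to $X$. I would attack this by disjointifying the increments $a_k-a_{k-1}$ of the increasing sequence $a_k:=x_n\vee\cdots\vee x_{n+k-1}\in X$ via principal band projections within $X$ (available because $X$ is $\sigma$-order complete), producing a disjoint sequence in $X$ whose $X^u$-sum equals $\sup_{m\ge n}x_m$; the disjoint-sequence clause of $\sigma$-UC then forces this sum into $X$. Arranging the disjointification so that it captures the entire supremum, rather than merely a band component of it, is the delicate technical point on which the whole proof rests.
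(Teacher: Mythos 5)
Your reduction to (iii)$\Rightarrow$(ii), the passage to $X^u$, and the use of Proposition~\ref{fin-heads} on the double net indexed by $\mathbb N^2$ (which indeed has finite heads) all match the paper's argument exactly. The divergence, and the problem, is at the crucial step of bringing the bound back down into~$X$. The paper does this by citing Fremlin's theorem \cite[Theorem~7.38]{Aliprantis:03}: every \emph{countable} dominable set in a $\sigma$-universally complete vector lattice is order bounded. That single citation is exactly the ``lifting claim'' you need (in fact you need less than you state --- order boundedness of $(x_n)$ in $X$ already suffices, since uo-Cauchy plus order bounded gives order Cauchy, and $\sigma$-order completeness finishes; you do not need the individual suprema $y_n$ to lie in $X$ a priori, as they come for free once the sequence is order bounded in~$X$).

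Your attempted substitute for Fremlin's theorem is where the genuine gap lies, and your own closing sentence correctly identifies it. Disjointifying the increments $a_k-a_{k-1}$ of the increasing sequence $a_k=x_n\vee\dots\vee x_{n+k-1}$ by principal band projections does not in general capture the supremum: if every increment is a weak unit (e.g.\ $a_k=(1-\tfrac1k)\one$ in some $C(K)$), each increment generates the whole space as a band, the ``disjointified'' sequence collapses to $a_1-a_0$ followed by zeros, and its sum is nowhere near $\sup_k a_k$. The disjoint-sequence clause of $\sigma$-universal completeness therefore cannot be invoked on any sequence your construction produces. Fremlin's actual argument is considerably more delicate (it is a genuine theorem, not a two-line disjointification), so as written your proof of (iii)$\Rightarrow$(ii) is incomplete at precisely the point where the hypothesis of $\sigma$-universal completeness must do its work. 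The repair is simply to quote \cite[Theorem~7.38]{Aliprantis:03} as the paper does, after which your remaining steps (order boundedness in $X$, hence order Cauchy in $X$, hence order convergent by $\sigma$-order completeness) go through.
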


\begin{proof}
  Let $(x_n)$ be a sequence in a $\sigma$-universally complete vector
  lattice $X$. It is clear that
  \eqref{G-o-con}$\Rightarrow$\eqref{G-uo-con},
  \eqref{G-o-Cau}$\Rightarrow$\eqref{G-uo-Cau},
  \eqref{G-uo-con}$\Rightarrow$\eqref{G-uo-Cau}, and
  \eqref{G-o-con}$\Rightarrow$\eqref{G-o-Cau}. It suffices to show
  that \eqref{G-uo-Cau}$\Rightarrow$\eqref{G-o-con}. Suppose that
  $(x_n)$ is uo-Cauchy in $X$. Since $X$ is order dense and,
  therefore, regular in its universal completion~$X^u$, $(x_n)$ is
  uo-Cauchy in~$X^u$. By Proposition~\ref{fin-heads}, it is order
  Cauchy in~$X^u$. It follows that $(x_n)$ is order bounded in~$X^u$,
  hence is dominable in~$X$. Recall that by a theorem of Fremlin
  \cite[Theorem~7.38]{Aliprantis:03}, every countable dominable set in
  a $\sigma$-universally complete vector lattice is order
  bounded. This implies that $(x_n)$ is order bounded in~$X$. It
  follows that $(x_n)$ is order Cauchy in~$X$. Since $X$ is
  $\sigma$-order complete, $(x_n)$ is order convergent.
\end{proof}

\section{Convergence in vector lattices with PPP}

Next, we present an application of preceding results to vector
lattices with PPP. Recall that every Archimedean vector lattice $X$
with a strong unit $e$ may be represented as a norm dense sublattice
of $C(K)$ for some compact Hausdorff space~$K$, such that $e$
corresponds to~$\one$. It is easy to see that $X$ is also order dense
in $C(K)$.  Furthermore, the characteristic function of every clopen
set is contained in~$X$; see Proposition~4.1
in~\cite{Bilokopytov}. If, in addition, $X$ has PPP, then $K$ is
totally disconnected, i.e., every point has a base of clopen
neighborhoods; see Corollary~5.8 in~\cite{Bilokopytov}.

\begin{theorem}
  Let $X$ be a vector lattice with PPP and $(x_\alpha)$ a net
  in~$X_+$. Then $x_\alpha\goesuo 0$ iff for every $u>0$ and every
  real positive $\varepsilon$ there exists a non-zero component $v$ of
  $u$ and an index $\alpha_0$ such that $P_vx_\alpha\le\varepsilon u$
  whenever $\alpha\ge\alpha_0$.
\end{theorem}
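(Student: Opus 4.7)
The plan is to fix $u > 0$ and reduce both directions to the $C(K)$ representation of the principal ideal $I_u$ generated by $u$. As an ideal, $I_u$ is regular in $X$; it inherits PPP from $X$ and carries $u$ as a strong unit, so by the remarks at the start of the section, $I_u$ embeds as an order-dense sublattice of $C(K)$ for some compact Hausdorff totally disconnected $K$, with $u$ corresponding to $\one$. Components of $u$ in $X$ correspond bijectively to characteristic functions $\chi_{V'}$ of non-empty clopen $V' \subseteq K$, and for such a component $v = \chi_{V'}$ the band projection $P_v$ restricted to $I_u$ agrees with multiplication by $\chi_{V'}$ in $C(K)$.

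For the forward implication, assume $x_\alpha \goesuo 0$ and fix $u > 0$ and $\varepsilon > 0$; we may assume $\varepsilon < 1$. Then $x_\alpha \wedge 2u \goeso 0$ in $X$, hence in $I_u$, and (by regularity) in $C(K)$. Applying Theorem~\ref{CK-o-conv} with $U = K$ produces a non-empty open $V \subseteq K$ and an index $\alpha_0$ with $x_\alpha \wedge 2u < \varepsilon$ on $V$ for $\alpha \ge \alpha_0$; by total disconnectedness, shrink $V$ to a non-empty clopen $V' \subseteq V$ and set $v = \chi_{V'}$, a non-zero component of $u$. The identity $(x_\alpha \wedge Nu) \wedge 2u = x_\alpha \wedge 2u$ for $N \ge 2$, evaluated pointwise on $V'$ (where $2u = 2 > \varepsilon$), forces $(x_\alpha \wedge Nu)(t) < \varepsilon$ on $V'$ for every $N$, so $P_v(x_\alpha \wedge Nu) = (x_\alpha \wedge Nu)\chi_{V'} \le \varepsilon v$ in $C(K)$. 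Combining the PPP formula $P_u x_\alpha = \sup_N (x_\alpha \wedge Nu)$ with the order continuity of $P_v$ then yields $P_v x_\alpha = P_v(P_u x_\alpha) = \sup_N P_v(x_\alpha \wedge Nu) \le \varepsilon v \le \varepsilon u$.

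For the converse, I will verify the criterion of Theorem~\ref{CK-o-conv} for $x_\alpha \wedge u$ in $C(K)$; this suffices because $x_\alpha \wedge u$ is order bounded by $u$ (so uo and order convergence agree), regularity transfers the conclusion back to $X$, and $u > 0$ was arbitrary. Given a non-empty open $U \subseteq K$ and $\varepsilon > 0$, total disconnectedness gives a non-empty clopen $U' \subseteq U$; set $u' = \chi_{U'}$, a non-zero component of $u$. Applying the hypothesis to $u'$ and $\varepsilon$ yields a non-zero component $v$ of $u'$ (which is also a component of $u$, since components of components are components) and an $\alpha_0$ with $P_v x_\alpha \le \varepsilon u'$ for $\alpha \ge \alpha_0$. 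Writing $v = \chi_{V'}$ with $V' \subseteq U'$ clopen, positivity of $P_v$ together with $P_v(x_\alpha \wedge u) = (x_\alpha \wedge u)\chi_{V'}$ give $(x_\alpha \wedge u)\chi_{V'} \le P_v x_\alpha \le \varepsilon \chi_{U'}$. Since $V' \subseteq U'$, this forces $x_\alpha \wedge u \le \varepsilon$ on $V'$, which is exactly the Theorem~\ref{CK-o-conv} condition.

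The main obstacle is the forward direction: $x_\alpha$ itself need not lie in $I_u$ and so has no direct interpretation as a function in $C(K)$, so the inequality $P_v x_\alpha \le \varepsilon u$ must be built from the order-bounded truncations $x_\alpha \wedge Nu \in I_u$ via the PPP representation of $P_u x_\alpha$ as their order supremum and the order continuity of $P_v$. The smaller facts I will rely on in passing are that ideals inherit PPP, that components of components are components, and that under the $C(K)$ embedding of $I_u$ the band projection $P_v$ onto the band generated by a clopen characteristic function $\chi_{V'}$ is precisely multiplication by $\chi_{V'}$.
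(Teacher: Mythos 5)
Your argument is correct. The forward implication follows the paper's proof essentially verbatim --- pass to the principal ideal $I_u$, which is regular and inherits PPP, represent it as an order dense sublattice of $C(K)$ with $K$ totally disconnected, apply Theorem~\ref{CK-o-conv}, and shrink the resulting open set to a nonempty clopen one --- except that you spell out the step the paper dismisses with ``it is easy to see'': since $x_\alpha$ itself need not lie in $I_u$, the bound $P_vx_\alpha\le\varepsilon u$ has to be assembled from the truncations $x_\alpha\wedge Nu\in I_u$ via the formula $P_ux_\alpha=\sup_N(x_\alpha\wedge Nu)$ and the order continuity of $P_v$; making this explicit is a genuine improvement in rigour. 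Where you diverge is the converse. The paper embeds $X$ into its universal completion $X^u\cong C^\infty(K)$ for an extremally disconnected compact $K$ and verifies the criterion of Theorem~\ref{CK-o-conv} there (via Remark~\ref{CKinfty}), using order density of $X$ in $X^u$ to pick $0<u\le\one_U$ for each open $U$. You instead stay inside $X$: for each $u>0$ you verify the criterion for the order bounded net $(x_\alpha\wedge u)$ in the $C(K)$ representation of $I_u$ and transfer back through the two regular inclusions $I_u\subseteq C(K)$ and $I_u\subseteq X$. Your route avoids the universal completion and the $C^\infty(K)$ machinery altogether, at the cost of re-running the representation argument for every $u$; the paper's route is shorter once $C^\infty(K)$ is in hand but leans on the unproved extension in Remark~\ref{CKinfty}. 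Both are sound. Two cosmetic points: to match the strict inequality in Theorem~\ref{CK-o-conv} you should invoke the hypothesis with $\varepsilon/2$, since your chain only yields $x_\alpha\wedge u\le\varepsilon$ on $V'$; and the fact that ideals inherit PPP, which you and the paper both use, deserves the one-line check that the band projection $P_y$ maps the ideal into itself because $0\le P_yw\le w$ on positive elements.
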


\begin{proof}
  Suppose $x_\alpha\goesuo 0$. Let $u\in X_+$ and
  $\varepsilon\in(0,1)$.  Then $x_\alpha\wedge u\goesuo 0$. Since the
  principal ideal $I_u$ is regular in~$X$, we have
  $x_\alpha\wedge u\goesuo 0$ in~$I_u$. We can represent $I_u$ as a
  dense sublattice of $C(K)$ for some topological space $K$ such that
  $u$ becomes~$\one$. It is easy to see that $I_u$ is order dense in
  $C(K)$, hence $x_\alpha\wedge\one\goesuo 0$ in $C(K)$. By
  Theorem~\ref{CK-o-conv}, there exists a non-empty open
  $W\subseteq K$ and an index $\alpha_0$ such that
  $x_\alpha\le\varepsilon$ on $W$ for all $\alpha\ge\alpha_0$.

  Since $X$ has PPP, so does~$I_u$. It follows that $K$ is totally
  disconnected and, therefore, there exists a non-empty clopen set $V$
  contained in~$W$. Put $v=\one_V$, then $v\in C(K)$, $v$ is a
  component of $u$ and $v\in I_u$, hence we may view $v$ as an element
  of~$X$. It is easy to see that for every $\alpha\ge\alpha_0$ we have
  $P_vx_\alpha\le\varepsilon\one$ in $C(K)$, hence
  $P_vx_\alpha\le\varepsilon u$ in $I_u$ and, therefore, in~$X$.

  To prove the converse, consider the universal completion $X^u$
  of~$X$. Then $X^u$ may be represented as $C^\infty(K)$ for some
  extremally disconnected Hausdorff compact~$K$. It suffices to show
  that $x_\alpha\goesuo 0$ in $C^\infty(K)$. We will do this by
  proving that $(x_\alpha)$ satisfies the condition in
  Theorem~\ref{CK-o-conv} (see also Remark~\ref{CKinfty}).

  Fix a non-empty open subset $U$ of $K$ and $\varepsilon>0$. Since
  $X$ is order dense in~$X^u$, there exists $0<u\in X$ such that
  $u\le\one_U$. By assumption, we can find a non-zero component $v$ of
  $u$ such that $P_vx_\alpha\le\varepsilon u$. Find a non-empty clopen
  set $V$ and $\delta>0$ such that $\delta\one_V\le v$ whenever
  $\alpha\ge\alpha_0$. Then $P_Vx_\alpha\le P_vx_\alpha\le\varepsilon
  u\le\varepsilon\one_U$. It follows that
  ${x_\alpha}_{|V}\le\varepsilon$.
  %
  %
\end{proof}

\section{Convergence in Boolean algebras}

The concept of order converges extends naturally to partially ordered
sets as follows: $x_\alpha\goeso x$ if there exist non-empty sets $A$
and $B$ such that $\sup A=x=\inf B$ and for every $a\in A$ and
$b\in B$, the order interval $[a,b]$ contains a tail of the net.
Lemma~\ref{oconv-dom-set} ensures that in vector lattices this
definition agrees with the old one. The following lemma is
straightforward:

\begin{lemma}\label{event-bds}
  For a net $(x_\alpha)$ in a partially ordered set, put 
  \begin{displaymath}
    A_0=\bigl\{a\mid\exists\alpha_0\ \forall\alpha\ge\alpha_0\
    a\le x_\alpha\bigr\}\quad\mbox{and}\quad
    B_0=\bigl\{b\mid\exists\alpha_0\ \forall\alpha\ge\alpha_0\
         b\ge x_\alpha\bigr\}.
  \end{displaymath}
  Then $x_\alpha\goeso x$ iff $A_0$ and $B_0$ are non-empty and
  $\sup A_0=x=\inf B_0$.
\end{lemma}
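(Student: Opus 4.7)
The plan is to unpack the partially-ordered-set definition of order convergence directly. For the forward direction I would start with witnessing sets $A,B$ for $x_\alpha\goeso x$, so that $\sup A=x=\inf B$ and every interval $[a,b]$ with $a\in A$, $b\in B$ contains a tail of the net. The very definition of $A_0$ and $B_0$ then gives $A\subseteq A_0$ and $B\subseteq B_0$, so in particular both are non-empty.

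Next I would check $\sup A_0=x$ (the identity $\inf B_0=x$ following by a symmetric argument). Since $A\subseteq A_0$, every upper bound of $A_0$ is also an upper bound of $A$, hence dominates $\sup A=x$; it therefore suffices to show that $x$ itself is an upper bound of $A_0$. To that end, I would fix $a\in A_0$ and $b\in B$ and, using directedness of the net's index set, pick a single index $\alpha_0$ past which both $a\le x_\alpha$ and $x_\alpha\le b$ hold simultaneously, so that $a\le b$. Thus each $a\in A_0$ is a lower bound of $B$, which gives $a\le \inf B=x$, as needed.

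For the converse I would simply take $A:=A_0$ and $B:=B_0$ as witnesses in the definition of order convergence. Given any $a\in A_0$ and $b\in B_0$, the two defining ``eventually'' statements combine, again by directedness of the index set, into a single tail on which $a\le x_\alpha\le b$, so $[a,b]$ contains a tail of $(x_\alpha)$.

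There is no real obstacle: the argument is a direct verification, with directedness of the index set used each time one combines two eventually-holding conditions into one. This parallels, in the abstract poset setting, the vector-lattice argument behind Lemma~\ref{oconv-dom-set}.
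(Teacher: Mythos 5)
Your argument is correct and complete: the paper states this lemma without proof (``straightforward''), and your verification --- showing $A\subseteq A_0$, $B\subseteq B_0$, using directedness to see that each $a\in A_0$ is a lower bound of $B$ (hence $a\le\inf B=x$), and taking $A_0,B_0$ themselves as witnesses for the converse --- is exactly the intended direct unpacking of the definition. Nothing to add.
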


The reader may have noticed that many of the proofs in the preceding
sections implicitly relied on Boolean algebras associated with the
topological space. To make this connection more explicit, we
will now extend Lemma~\ref{CK-set-inf}, Theorem~\ref{CK-o-conv} and
Corollary~\ref{CK-uo-Cauchy} to Boolean algebras using essentially the
same techniques as before.

We refer the reader
to~\cite{Vladimirov:02} for background on Boolean algebras. The symbol
$\mathcal A$ will stand for a Boolean algebra. We write $\bar a$ for
the complement of $a$, $a-b$ for the difference $a\wedge\bar b$ and
$\abs{a-b}$ for the symmetric difference
$(a\wedge\bar b)\vee(b\wedge\bar a)$. Note that $x\wedge a=y\wedge a$
iff $\abs{x-y}\perp a$ iff $\abs{x-y}\le\bar a$. Also, $\abs{x-a}\le
z$ iff $a-z\le x\le a\vee x$. 

Since every net in a Boolean algebra is order bounded, uo-convergence
and order convergence agree. It follows easily from the definition of
order convergence that $x_\alpha\goeso x$ iff
$\bar x_\alpha\goeso\bar x$. Since lattice operations in a Boolean
algebra satisfy infinite distributive laws, it is easy to see that
$x_\alpha\goeso x$ implies $x_\alpha\wedge c\goeso x\wedge c$ and
$x_\alpha\vee c\goeso x\vee c$ for every $c$.

\begin{lemma}\label{sym-diff}
  $x_\alpha\goeso x$ iff $\abs{x_\alpha-x}\goeso 0$.
\end{lemma}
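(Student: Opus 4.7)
The plan is to translate both sides through Lemma \ref{event-bds} and the convenient identity stated just before the lemma, namely $\abs{x-a}\le z$ iff $a-z\le x\le a\vee z$, and then verify the required suprema/infima using infinite distributivity in the Boolean algebra (which holds whenever the sup/inf in question exists). Observe first that for any net $(y_\alpha)$ in $\mathcal A$ the condition $y_\alpha\goeso 0$ simplifies: since $\sup A=0$ forces $A=\{0\}$, only the upper bounds matter, so $y_\alpha\goeso 0$ iff there is a set $C$ with $\inf C=0$ such that each $c\in C$ eventually dominates $y_\alpha$.

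For the forward direction, assume $x_\alpha\goeso x$ and pick witnessing sets $A$ and $B$ with $\sup A=x=\inf B$ such that each pair $a\in A,\,b\in B$ eventually brackets the net, i.e.\ $a\le x_\alpha\le b$. Since $a\le x\le b$ as well, the Boolean computation
\begin{displaymath}
  \abs{x_\alpha-x}=(x_\alpha\wedge\bar x)\vee(x\wedge\bar{x_\alpha})
  \le(b\wedge\bar a)\vee(b\wedge\bar a)=b-a
\end{displaymath}
shows that $C:=\{b-a\mid a\in A,\ b\in B\}$ is a family of eventual upper bounds for $\abs{x_\alpha-x}$. It remains to show $\inf C=0$. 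If $c\le b-a$ for all $a,b$, then $c\wedge a=0$ for each $a\in A$ and $c\le b$ for each $b\in B$; by the Boolean distributive laws, $c\wedge x=c\wedge\sup A=\sup_{a\in A}(c\wedge a)=0$ and $c\le\inf B=x$, so $c\le x\wedge\bar x=0$. Thus $\abs{x_\alpha-x}\goeso 0$.

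For the converse, suppose $\abs{x_\alpha-x}\goeso 0$, and fix $C$ with $\inf C=0$ such that each $c\in C$ eventually dominates $\abs{x_\alpha-x}$. Using the stated identity, this means that for each $c\in C$ one has eventually $x-c\le x_\alpha\le x\vee c$. Set $A=\{x-c\mid c\in C\}$ and $B=\{x\vee c\mid c\in C\}$; given any $a_0=x-c_1\in A$ and $b_0=x\vee c_2\in B$, passing to a common later index produces the bracketing $a_0\le x_\alpha\le b_0$. The remaining task is to verify $\sup A=x=\inf B$. For $\sup A$: $x$ is clearly an upper bound, and if $y\ge x-c$ for all $c\in C$ then $y\vee(x\wedge c)\ge(x-c)\vee(x\wedge c)=x$ for each $c$; taking the infimum over $c\in C$ and using $y\vee\inf_c(x\wedge c)=y\vee(x\wedge\inf C)=y$ gives $y\ge x$. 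The argument for $\inf B=x$ is the dual: if $y\le x\vee c$ for all $c$, then $y\wedge\bar x\le c$ for all $c\in C$, so $y\wedge\bar x\le\inf C=0$, i.e.\ $y\le x$. Lemma \ref{event-bds} then delivers $x_\alpha\goeso x$.

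The only real content is the use of infinite distributivity to swap an arbitrary meet/join past a fixed element, which is standard in Boolean algebras whenever the needed extremum exists; apart from this there are no obstacles, and the proof is just careful bookkeeping with the identity relating $\abs{x-a}\le z$ to the two-sided bracket $a-z\le x\le a\vee z$.
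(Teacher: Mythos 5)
Your proof is correct and follows essentially the same route as the paper: in the forward direction you form $C=\{b-a\mid a\in A,\ b\in B\}$ from the witnessing sets, and in the converse you set $A=\{x-c\}$, $B=\{x\vee c\}$ via the identity $\abs{x_\alpha-x}\le c\iff x-c\le x_\alpha\le x\vee c$. The only difference is that you spell out the verifications of $\inf C=0$ and $\sup A=x=\inf B$ (using the infinite distributive laws), which the paper leaves as ``easy to see.''
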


\begin{proof}
  Suppose that $x_\alpha\goeso x$ and let $A$ and $B$ be as in the
  definition of order convergence. Put
  $C=\bigl\{b-a\mid a\in A, b\in B\bigr\}$.  It is easy to see that
  $\inf C=0$. For $a\in A$ and $b\in B$ there exists $\alpha_0$ such
  that for every $\alpha\ge\alpha_0$, $a\le x_\alpha\le b$. It follows
  that $\abs{x-x_\alpha}\le b-a$ and, therefore,
  $\abs{x_\alpha-x}\goeso 0$.

  Suppose that $\abs{x_\alpha-x}\goeso 0$.  We can then find a set $C$
  with $\inf C=0$ and for every $c\in C$ there exists $\alpha_0$ such
  that for all $\alpha\ge\alpha_0$ we have $\abs{x_\alpha-x}\le c$ or,
  equivalently, $x-c\le x_\alpha\le x\vee c$. Put
  $A=\{x-c\mid c\in C\}$ and $B=\{x\vee c\mid c\in C\}$; then
  $\sup A=x=\inf B$. It follows that $x_\alpha\goeso x$.
\end{proof}

\begin{lemma}\label{BA-set-inf}
  Let $B\subseteq\mathcal A$. Then $\inf B=0$ iff for every $u>0$ there exists
  a non-zero $v\le u$ and $b\in B$ such that $v\perp b$. Also, $\sup B=1$ iff
  for every $u>0$ there exists $0<v\le u$ and $b\in B$ such that
  $v\le b$.
\end{lemma}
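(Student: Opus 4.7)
The plan is to prove the first equivalence directly and then derive the second by complementation, exploiting the De Morgan duality $\sup B=1\iff\inf\{\bar b\mid b\in B\}=0$ together with the fact that $v\perp b$ iff $v\le\bar b$.

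For the forward direction of the first equivalence, I would argue by contrapositive. Suppose the condition fails: there exists $u>0$ such that for every non-zero $v\le u$ and every $b\in B$, $v$ is not disjoint from $b$. The condition ``no non-zero $v\le u$ is disjoint from $b$'' is equivalent to $u\wedge\bar b=0$, i.e., $u\le b$. Since this holds for every $b\in B$, the element $u$ is a positive lower bound for $B$, so $\inf B\ne 0$.

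For the reverse direction, suppose $w$ is a lower bound of $B$ and $w>0$. Apply the condition with $u:=w$: there exist $0<v\le w$ and $b\in B$ with $v\perp b$, i.e., $v\wedge b=0$. But $v\le w\le b$ forces $v=v\wedge b=0$, contradicting $v>0$. Hence $0$ is the only lower bound, so $\inf B=0$.

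The second statement then follows by applying the first to $B':=\{\bar b\mid b\in B\}$: we have $\sup B=1$ iff $\inf B'=0$, and the condition ``there exist $0<v\le u$ and $b'\in B'$ with $v\perp b'$'' translates, on writing $b'=\bar b$, to ``there exist $0<v\le u$ and $b\in B$ with $v\le b$'', which is the asserted criterion. No real obstacle is expected; the only care needed is the clean translation between disjointness and complementation, which is immediate from $v\perp b\iff v\le\bar b$ noted just before the lemma.
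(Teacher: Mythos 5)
Your proof is correct and follows essentially the same route as the paper's: both directions hinge on the observation that no non-zero $v\le u$ is disjoint from $b$ precisely when $u\le b$, with the paper phrasing the forward direction directly (taking $v=u-b$ for some $b$ with $u\not\le b$) and you phrasing it contrapositively. Your derivation of the second claim from the first by complementation is a clean way to make precise what the paper dismisses as ``similar.''
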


\begin{proof}
  We only prove the first claim; the second is similar. Suppose that
  $\inf B=0$ and $u>0$. Then there exists $b\in B$ such that $u\not\le
  b$. It follows that $v\perp b$ where $v=u-b>0$. Conversely, suppose
  that $\inf B=0$ fails. Then there exists $u$ with $0<u\le B$. Then
  for every $0<v\le u$ and $b\in B$ we have $v\le b$, so that
  $v\not\perp b$.
\end{proof}

\begin{theorem}\label{BA-o-conv}
  For a net $(x_\alpha)$ in a Boolean algebra $\mathcal A$,
  $x_\alpha\goeso x$ iff for every $u>0$ there exists $0<v\le u$ and
  an index $\alpha_0$ such that $x_\alpha\wedge v=x\wedge v$ for all
  $\alpha\ge\alpha_0$.
\end{theorem}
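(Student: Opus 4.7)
The proof should mirror the strategy used for Theorem~\ref{CK-o-conv}, with Lemma~\ref{BA-set-inf} playing the role of Lemma~\ref{CK-set-inf}. First, I would use Lemma~\ref{sym-diff} to reduce to the case $x=0$: the left-hand side becomes $\abs{x_\alpha-x}\goeso 0$, while the identity $x\wedge a=y\wedge a$ iff $\abs{x-y}\perp a$ recorded in the preliminaries translates the right-hand condition into: for every $u>0$ there is $0<v\le u$ with $\abs{x_\alpha-x}\perp v$ eventually. Writing $y_\alpha=\abs{x_\alpha-x}$, the theorem reduces to showing $y_\alpha\goeso 0$ iff for every $u>0$ there exists $0<v\le u$ and $\alpha_0$ such that $y_\alpha\perp v$ for all $\alpha\ge\alpha_0$.

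For the forward direction, I would appeal to the definition of order convergence in posets: there are non-empty sets $A,B$ with $\sup A=0=\inf B$ such that every $[a,b]$ contains a tail. Since $0$ is the least element of $\mathcal A$ and $\sup A=0$, necessarily $A=\{0\}$, so each $b\in B$ dominates a tail of $(y_\alpha)$. Given $u>0$, Lemma~\ref{BA-set-inf} applied to $B$ supplies $0<v\le u$ and $b\in B$ with $v\perp b$; combined with $y_\alpha\le b$ eventually, this yields $y_\alpha\wedge v\le b\wedge v=0$, i.e., $y_\alpha\perp v$ eventually, as required.

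For the converse, I would invoke the hypothesis to choose, for each $u>0$, a non-zero $v_u\le u$ and an index $\alpha_u$ so that $y_\alpha\perp v_u$ for $\alpha\ge\alpha_u$, and then form $B=\{\bar v_u:u>0\}$. By construction every $\bar v_u$ dominates a tail of $(y_\alpha)$. To verify $\inf B=0$ via Lemma~\ref{BA-set-inf}, given $u>0$ take $v:=v_u\le u$ and $b:=\bar v_u\in B$; the relation $v\perp b$ is automatic. Setting $A=\{0\}$, the pair $(A,B)$ satisfies the definition of $y_\alpha\goeso 0$, and Lemma~\ref{sym-diff} returns $x_\alpha\goeso x$.

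The only subtle step is the initial reduction: recognising that ``$x_\alpha\wedge v=x\wedge v$ eventually'' in the Boolean setting is the exact analogue of ``$f_\alpha<\varepsilon$ on $V$ eventually'' in $C(\Omega)$, via the symmetric-difference identity. Once that translation is in hand, the two directions run in essentially perfect parallel with the proof of Theorem~\ref{CK-o-conv}.
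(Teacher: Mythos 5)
Your proof is correct and follows essentially the same route as the paper: reduce via Lemma~\ref{sym-diff} to $\abs{x_\alpha-x}\goeso 0$, translate the eventual identity $x_\alpha\wedge v=x\wedge v$ into $\abs{x_\alpha-x}\le\bar v$, and apply Lemma~\ref{BA-set-inf} in both directions (the paper's set $C$ of dominating elements is exactly your set $B$, with your observation that $A=\{0\}$ left implicit there). No gaps.
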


\begin{proof}
  Suppose that $x_\alpha\goeso x$ and $u>0$. Then
  $\abs{x_\alpha-x}\goeso 0$. Take $C\subseteq A$ with
  $\inf C=0$ which ``witnesses'' the latter convergence. By
  Lemma~\ref{BA-set-inf}, there exists $v\in\mathcal A$ and $c\in C$
  such that $0<v\le u$ and $v\perp c$. There exists $\alpha_0$ such
  that for every $\alpha\ge\alpha_0$ we have $\abs{x_\alpha-x}\le
  c\le\bar v$, and, therefore, $x_\alpha\wedge v=x\wedge v$.



  To prove the converse, for every $u>0$ we fix $v_u$ 
   and $\alpha_u$ such that $0<v_u\le u$ for all $\alpha\ge\alpha_u$
   we have $ x_\alpha\wedge
   v_u=x\wedge v_u$, or, equivalently, $\abs{x-x_\alpha}\le
   \bar v_u$. Let $C=\{\bar v_u\mid u>0\}$. By Lemma~\ref{BA-set-inf}, we
   have $\inf C=0$. We conclude that $\abs{x_\alpha-x}\goeso 0$ and,
   therefore, $x_\alpha\goeso x$.
\end{proof}

\begin{corollary}
  For a net $(x_\alpha)$ in a Boolean algebra, TFAE:
  \begin{enumerate}
  \item\label{BA-C-C} $(x_\alpha)$ is order Cauchy;
  \item\label{BA-C-stab} For every $u>0$ there exists $0<v\le u$ and
    $\alpha_0$ such that $x_\alpha\wedge v=x_\beta\wedge v$ for all
    $\alpha,\beta\ge\alpha_0$;
  \item\label{BA-C-01}
    \begin{math}
      \forall u>0\
      \Bigl(\exists v\in(0,u]\mbox{ and }\alpha_0\
       \forall\alpha\ge\alpha_0\ v\le x_\alpha\Bigr)\mbox{ or }\\
      \Bigl(\exists v\in(0,u]\mbox{ and }\alpha_0\
       \forall\alpha\ge\alpha_0\ v\le\bar x_\alpha\Bigr)
    \end{math}
   \item\label{BA-C-nots} $\sup(A_0\cup\overline{B_0})=1$, where $A_0$ and
    $B_0$ are as in Lemma~\ref{event-bds};
   \item\label{BA-C-AB} There exists non-empty sets $A$ and $B$ such
     that $\sup(A\cup\overline{B})=1$ and for every $a\in A$ and $b\in B$
     there exists $\alpha_0$ with $a\le x_\alpha\le b$ for all
     $\alpha\ge\alpha_0$. 
  \end{enumerate}
\end{corollary}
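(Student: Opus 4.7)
The plan is to run the chain of equivalences (i) $\Leftrightarrow$ (ii) $\Leftrightarrow$ (iii) $\Leftrightarrow$ (iv) $\Leftrightarrow$ (v) in sequence, relying on Theorem \ref{BA-o-conv} for the first link and on Lemma \ref{BA-set-inf} for the third; the remaining two links are elementary Boolean manipulations.

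For (i) $\Leftrightarrow$ (ii), I interpret order Cauchy via Lemma \ref{sym-diff} as meaning that the double net $(|x_\alpha-x_\beta|)_{(\alpha,\beta)\in\Lambda^2}$ is order null, and apply Theorem \ref{BA-o-conv} to it with limit $0$. The output is: for every $u>0$ there is $0<v\le u$ such that $|x_\alpha-x_\beta|\wedge v=0$ eventually in $\Lambda^2$. Since $|x-y|\wedge v=0$ is equivalent to $x\wedge v=y\wedge v$, and since an eventual property on the product directed set $\Lambda^2$ can be rephrased as an eventual property on $\Lambda$ in both indices, this is exactly (ii).

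The implication (ii) $\Rightarrow$ (iii) is the only link that takes some actual work. I fix $v,\alpha_0$ from (ii), freeze some $\alpha_1\ge\alpha_0$, and set $w=x_{\alpha_1}\wedge v$; then $x_\alpha\wedge v=w$ for every $\alpha\ge\alpha_0$. Writing $v=w\vee(v\wedge\bar w)$ and using $v>0$, at least one summand is non-zero: if $w>0$, then $w\le x_\alpha$ eventually, giving the first alternative in (iii); if $v\wedge\bar w>0$, then the identity $(v\wedge\bar w)\wedge x_\alpha=v\wedge x_\alpha\wedge\bar w=w\wedge\bar w=0$ yields $v\wedge\bar w\le\bar x_\alpha$ eventually, giving the second. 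The converse is immediate, since each alternative forces $x_\alpha\wedge v$ to be eventually equal to $v$ or to $0$.

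Condition (iii) $\Leftrightarrow$ (iv) is a pure translation: ``$v\le x_\alpha$ eventually'' is the definition $v\in A_0$, while ``$v\le\bar x_\alpha$ eventually'' is $\bar v\in B_0$, i.e., $v\in\overline{B_0}$. Thus (iii) says that for every $u>0$ there is $0<v\le u$ belonging to $A_0\cup\overline{B_0}$, which by the second half of Lemma \ref{BA-set-inf} is exactly $\sup(A_0\cup\overline{B_0})=1$. Finally, (iv) $\Rightarrow$ (v) is witnessed by $A=A_0$, $B=B_0$ (both nonempty since $0\in A_0$ and $1\in B_0$), and (v) $\Rightarrow$ (iv) follows by observing that the trapping condition in (v), combined with the nonemptiness of $A$ and $B$, forces $A\subseteq A_0$ and $B\subseteq B_0$, hence $\sup(A_0\cup\overline{B_0})\ge\sup(A\cup\overline{B})=1$. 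No step is genuinely hard; the main thing to watch is the interplay between $A_0,B_0$ and their complements, and the passage from the double index in (i)--(ii) to the single index in (iii)--(v), but with Theorem \ref{BA-o-conv} and Lemma \ref{BA-set-inf} available, the whole argument is largely bookkeeping.
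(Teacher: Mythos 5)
Your proof is correct and follows essentially the same route as the paper: (i)$\Leftrightarrow$(ii) via Theorem~\ref{BA-o-conv} applied to the double net, (ii)$\Leftrightarrow$(iii) via freezing $w=x_{\alpha_0}\wedge v$ and splitting into the cases $w>0$ and $w=0$ (your decomposition $v=w\vee(v\wedge\bar w)$ is the same dichotomy), (iii)$\Leftrightarrow$(iv) via the second half of Lemma~\ref{BA-set-inf}, and (iv)$\Leftrightarrow$(v) by direct verification. The only difference is that you spell out a few details (nonemptiness of $A_0,B_0$, downward closedness, the passage from $\Lambda^2$ to $\Lambda$) that the paper leaves implicit.
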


\begin{proof}
  \eqref{BA-C-C}$\Leftrightarrow$\eqref{BA-C-stab}
  $(x_\alpha)$ is order Cauchy iff $\abs{x_\alpha-x_\beta}\to 0$. By
  Theorem~\ref{BA-o-conv}, the latter is equivalent to
  $\forall u>0$ $\exists v\in(0,u]$ and $\alpha_0$
  $\forall\alpha,\beta\ge\alpha_0$ $\abs{x_\alpha-x_\beta}\perp
  v$ or, equivalently, $x_\alpha\wedge v=x_\beta\wedge v$. 

  \eqref{BA-C-01}$\Rightarrow$\eqref{BA-C-stab} trivially. To show
  \eqref{BA-C-stab}$\Rightarrow$\eqref{BA-C-01}, let $u$, $v$, and
  $\alpha_0$ be in \eqref{BA-C-stab}. Put $w=x_{\alpha_0}\wedge v$; then
  $(x_\alpha\wedge v)=w$ for all $\alpha\ge\alpha_0$. If $w=0$, we are
  done. Otherwise, replace $v$ with~$w$. 
  
  \eqref{BA-C-01}$\Leftrightarrow$\eqref{BA-C-nots}:
  \eqref{BA-C-01} effectively says that for every $u>0$ there exists
  $v\in(0,u]$ such that $v\in A_0\cup\overline{B_0}$. This is
  equivalent to  $\sup(A_0\cup\overline{B_0})=1$ by Lemma~\ref{BA-set-inf}.

  \eqref{BA-C-nots}$\Leftrightarrow$\eqref{BA-C-AB}
  is straightforward. 
\end{proof}

\end{document}